\theoremstyle{plain}
\newtheorem{theo}{Theorem}[section]
\newtheorem{prop}[theo]{Proposition}
\newtheorem{lemm}[theo]{Lemma}
\theoremstyle{definition}
\newtheorem*{defi}{Definition}
\newtheorem{exam}[theo]{Example}
\theoremstyle{remark}
\newtheorem*{rema}{Remark}
\numberwithin{equation}{section}
\newcommand{\field}[1]{\mathbb{#1}}
\newcommand{\C}{\field{C}}
\newcommand{\R}{\field{R}}
\newcommand{\Z}{\field{Z}}
\DeclareMathOperator{\sgn}{sgn}
\def\e{\epsilon}
\def\P{\mathcal P}
\def\Q{\mathcal Q}
\def\A{\mathcal A}
\def\ep{\epsilon}
\begin{document}

\title
[Lattice multi-polygons]{Lattice multi-polygons}

\author[A. Higashitani]{Akihiro Higashitani}
\address{Department of Mathematics, Kyoto Sangyo University, Motoyama, Kamigamo, Kita-Ku, Kyoto, 603-8555, Japan}
\email{ahigashi@cc.kyoto-su.ac.jp}
\author[M. Masuda]{Mikiya Masuda}
\address{Department of Mathematics, Graduate School of Science, Osaka City University, Sugimoto, 
Sumiyoshi-ku, Osaka 558-8585, Japan}
\email{masuda@sci.osaka-cu.ac.jp}
\thanks{{\bf 2010 Mathematics Subject Classification:} Primary 05A99, Secondary 51E12, 57R91 \\
\;\;\;\; {\bf Keywords:} Lattice polygon, twelve-point theorem, Pick's formula, Ehrhart polynomial, toric topology \\
\;\;\;\; The first author is supported by JSPS Research Fellowship for Young Scientists. 
The second author is partially supported by Grant-in-Aid for Scientific Research 22540094}

\maketitle

\begin{abstract}
We discuss generalizations of some results on lattice polygons to certain piecewise linear loops which may have a self-intersection but have vertices in the lattice $\Z^2$.  We first prove a formula on the rotation number of a unimodular sequence in $\Z^2$.  This formula implies the generalized twelve-point theorem in \cite{po-rv00}.  We then introduce the notion of lattice multi-polygons which is a generalization of lattice polygons, 
state the generalized Pick's formula and discuss the classification of Ehrhart polynomials of lattice multi-polygons and also of several natural subfamilies of lattice multi-polygons.   
\end{abstract}

\section*{Introduction}

Lattice polygons are an elementary but fascinating object.  Many interesting results such as Pick's formula are known for them.  However, not only the results are interesting, but also there are a variety of proofs to the results and some of them use advanced mathematics such as toric geometry, complex analysis and modular form (see \cite{fult93, diro95, oda88, po-rv00} for example).  These proofs are unexpected and make the study of lattice polygons more fruitful and intriguing.     

Some of the results on lattice polygons are generalized to certain generalized polygons.  For instance, Pick's formula \cite{pick99}
 \[A(P)=\sharp P^\circ + \frac{1}{2}B(P) - 1\] 
for a lattice polygon $P$, where $A(P)$ is the area of $P$ and $\sharp P^\circ$ (resp. $B(P)$) is the number of lattice points in the interior (resp. on the boundary) of $P$, is generalized in several directions and one of the generalizations is to certain piecewise linear loops which may have a self-intersection but have vertices in $\Z^2$ (\cite{Grum-She-93, masu99}).  
As is well known, Pick's formula has an interpretation in toric geometry when $P$ is convex (\cite{fult93, oda88}) 
but the proof {using toric geometry is} not applicable when $P$ is concave.  However, once we develop toric geometry from the topological point of view, that is \emph{toric topology}, Pick's formula can be proved along the same line in full generality as is done in \cite{masu99}.  

Another such result on lattice polygons is the twelve-point theorem.  It says that if $P$ is a convex lattice polygon which contains the origin in its interior as a unique lattice point, then 
\[B(P)+B(P^\vee)=12,\] 
where $P^\vee$ is the lattice polygon dual to $P$. 
Several proofs are known to the theorem and one of them again uses toric geometry.  B. Poonen and F. Rodriguez-Villegas \cite{po-rv00} provided a new proof using modular forms. They also formulate a generalization of the twelve-point theorem and claim that their proof works in the general setting.  
It is mentioned in \cite{po-rv00} that the proof using toric geometry is difficult to generalize, but a slight generalization of the proof of \cite[Theorem 5.1]{masu99}, which uses toric topology and is on the same line of the proof using toric geometry, implies the generalized twelve-point theorem.  

{Generalized polygons considered in the generalization of the twelve-point theorem are what is called \emph{legal loops}.  A legal loop may have a self-intersection and is associated to a unimodular sequence of vectors $v_1,\dots,v_d$ in $\Z^2$.  Here unimodular means that any consecutive two vectors $v_i,v_{i+1}$ $(i=1,\dots,d)$ in the sequence form a basis of $\Z^2$, where $v_{d+1}=v_1$.  Therefore, $\e_i=\det(v_i,v_{i+1})$ is $\pm 1$.  One sees that there is a unique integer $a_i$ satisfying 
\[
\e_{i-1}v_{i-1}+\e_i v_{i+1}+a_iv_i=0
\]
for each $i=1,\dots,d$.  Note that $|a_i|$ is twice the area of the triangle with vertices $v_{i-1}, v_{i+1}$ and the origin.  We prove that the rotation number of the unimodular sequence $v_1,\dots,v_d$ around the origin is given by 
\[
\frac{1}{12}\big(\sum_{i=1}^da_i+3\sum_{i=1}^d\e_i\big)
\]
(see Theorem~\ref{rotation}).  The generalized twelve-point theorem easily follows from this formula.  This formula was originally proved using toric topology which requires some advanced topology, but after that, an elementary and combinatorial proof was found.  We give it in Section~\ref{sect:1} and the original proof in the Appendix.  A different elementary proof to the above formula appeared in \cite{ziv12} while revising this paper.} 

We also introduce the notion of lattice multi-polygons.  A \emph{lattice multi-polygon} is a piecewise linear loop with vertices in $\Z^2$ together with a sign function which assigns either $+$ or $-$ to each side and satisfies some mild condition. The piecewise linear loop may have a self-intersection and we think of it as a sequence of points in $\Z^2$.  A lattice polygon can naturally be regarded as a lattice multi-polygon.  The generalized Pick's formula holds for lattice multi-polygons, so Ehrhart polynomials can be defined for them.  The Ehrhart polynomial of a lattice multi-polygon is of degree at most two. The constant term is the rotation number of normal vectors to sides of the multi-polygon and not necessarily $1$ unlike ordinary Ehrhart polynomials.  The other coefficients have similar geometrical meaning to the ordinary ones but they can be zero or negative unlike the ordinary ones.  
The family of lattice multi-polygons has some natural subfamilies, e.g. the family of all convex lattice polygons. We discuss the characterization of Ehrhart polynomials of not only all lattice multi-polygons but also some natural subfamilies. 

The structure of the present paper is as follows.  In Section 1, we {give the elementary proof to the formula} which describes the rotation number of a unimodular sequence of vectors in $\Z^2$ around the origin. Here the vectors in the sequence may go back and forth. 
The proof using toric topology is given in the Appendix. 
In Section 2, we observe that {the formula} implies the generalized twelve-point theorem.  In Section 3, we introduce the notion of lattice multi-polygon and state the generalized Pick's formula for lattice multi-polygons.  In Section 4, we discuss the characterization of Ehrhart polynomials of lattice multi-polygons and of several natural subfamilies of lattice multi-polygons.

\section{Rotation number of a unimodular sequence} \label{sect:1}

We say that a sequence of vectors $v_1,\dots,v_d$ in $\Z^2$ $(d\ge 2)$ is \emph{unimodular} 
if each triangle with vertices ${\bf 0}, v_i$ and $v_{i+1}$ contains no lattice point except the vertices,
where ${\bf 0}=(0,0)$ and $v_{d+1}=v_1$. 
The vectors in the sequence are not necessarily counterclockwise or clockwise. 
They may go back and forth.  We set 
\begin{equation} \label{ei}
\e_i=\det(v_i,v_{i+1}) \quad\text{for $i=1,\dots,d$}.
\end{equation}
In other words, $\e_i=1$ if the rotation from $v_i$ to $v_{i+1}$ (with angle less than $\pi$) is counterclockwise and $\e_i=-1$ otherwise. 
Since each successive pair $(v_{j},v_{j+1})$ is a basis of $\Z^2$ for $j=1,\dots,d$, one has 
\begin{equation*} \label{matrix}
(v_i,v_{i+1})=(v_{i-1},v_i)\begin{pmatrix} 0 &-\e_{i-1}\e_i\\
1 & -\e_ia_i\end{pmatrix}
\end{equation*}
with a unique integer $a_i$ for each $i$.  This is equivalent to
\begin{equation} \label{ai}
\e_{i-1}v_{i-1}+\e_iv_{i+1}+a_iv_i=0.
\end{equation}
{Note that $|a_i|$ is twice the area of the triangle with vertices ${\bf 0}, v_{i-1}$ and $v_{i+1}$.}

\begin{exam}\label{ex1}
(a) {Take a unimodular sequence} 
 \[\P=(v_1,\ldots,v_5)=((1,0),(0,1),(-1,0),(0,-1),(-1,-1)),\] 
{see Figure~\ref{legal1} in Section~\ref{sect:2}.}
Then 
\[\e_1=\e_2=\e_3=\e_5=1,\ \e_4=-1\quad \text{and} \quad a_1=a_4=a_5=1,\ a_2=a_3=0\] 
and the rotation number of $\P$ {around the origin} is 1. 

(b) {Take another unimodular sequence}
\[\Q=(v_1,\ldots,v_6)=((1,0),(-1,1),(0,-1),(1,1),(-1,0),(1,-1)),\] 
{see Figure~\ref{legal2} in Section~\ref{sect:2}.}
Then 
\[\e_1=\cdots=\e_6=1 \;\;\text{and}\;\; a_1=a_6=0,\ a_2=a_4=1,\ a_3=a_5=2\] 
and the rotation number {of $\Q$ around the origin} is 2.
\end{exam}

Our main result in this section is the following. 

\begin{theo} \label{rotation}
The rotation number of a unimodular sequence $v_1,\dots,v_d$ $(d\ge 2)$ around the origin is given by 
\begin{equation}\label{12}
\frac{1}{12}\big(\sum_{i=1}^da_i+3\sum_{i=1}^d\e_i\big)
\end{equation}
{where $\e_i$ and $a_i$ are the integers defined in \eqref{ei} and \eqref{ai}.}
\end{theo}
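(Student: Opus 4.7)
The plan is to show that both the rotation number $R(\mathcal{P})$ and the quantity
\[F(\mathcal{P}):=\tfrac{1}{12}\bigl(\textstyle\sum_{i=1}^d a_i+3\sum_{i=1}^d\e_i\bigr)\]
are invariants of unimodular sequences modulo a short list of local moves on the sequence, and then to verify the identity on a canonical representative of each equivalence class.

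The two moves I would use, together with their inverses, are the following. The first is a \emph{backtrack removal}: if $a_i=0$ and $\e_{i-1}=-\e_i$, then \eqref{ai} forces $v_{i-1}=v_{i+1}$, and one can delete the pair $(v_i,v_{i+1})$ to obtain a unimodular sequence of length $d-2$. The second is a \emph{blowdown}: if $|a_i|=1$, then $\det(v_{i-1},v_{i+1})=-\e_{i-1}\e_i a_i=\pm 1$, so $(v_{i-1},v_{i+1})$ is itself a $\Z^2$-basis, and one can delete $v_i$ to obtain a sequence of length $d-1$. Invariance of $R$ under the backtrack move is clear since the two cancelled angles are equal and opposite; invariance under the blowdown move follows because $v_i$ lies in the angular wedge from $v_{i-1}$ to $v_{i+1}$, so the two rotations $v_{i-1}\to v_i$ and $v_i\to v_{i+1}$ sum to the single new rotation $v_{i-1}\to v_{i+1}$. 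Invariance of $F$ is a direct algebraic check: writing \eqref{ai} at the three affected indices $i-1,i,i+1$ and solving for the new $a$- and $\e$-values, one finds that in each case the changes in $\sum a_i$ and in $3\sum \e_i$ cancel exactly (for the backtrack the net change is $0+0$; for the blowdown a change of $\pm 3$ in $\sum a_i$ is compensated by a change $\mp 3$ in $3\sum \e_i$).

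For canonical representatives I would take, for each $n\in\Z$, the sequence obtained by traversing $(1,0),(0,1),(-1,0),(0,-1)$ a total of $|n|$ times (with the sense reversed if $n<0$, and a length-$2$ backtrack sequence for $n=0$). On such a sequence every $a_i=0$ and every $\e_i=\mathrm{sgn}(n)$, so directly $F=\tfrac{1}{12}\cdot 3\cdot 4|n|\cdot\mathrm{sgn}(n)=n=R$, and the theorem follows.

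The main obstacle is the reduction: in a generic unimodular sequence there need not exist any index with $|a_i|\le 1$, so neither move is immediately applicable. The plan is to induct on the pair $(d,\sum_{i}|a_i|)$ in lexicographic order and argue that, possibly after first performing a bounded number of inverse moves (backtrack insertion or blowup), one of the two reducing moves can always be applied so as to strictly decrease this complexity. Carrying this out requires a finite case analysis on the possible sign patterns of $\e_{i-1},\e_i,\e_{i+1}$ and the sign of $a_i$; this combinatorial bookkeeping is the technical heart of the argument, and the rest of the proof is essentially forced by the invariance calculations above.
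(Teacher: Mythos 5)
Your two moves are essentially the paper's reduction steps, but there are two genuine gaps. First, the reduction itself: you assert that a unimodular sequence ``need not contain any index with $|a_i|\le 1$'' and then defer the whole termination argument (lexicographic induction with auxiliary blowups) as unproven ``bookkeeping.'' In fact that bookkeeping is the heart of the matter, and it is handled in the paper by a short lemma you are missing: if $v_j$ has maximal Euclidean norm in the sequence, then \eqref{ai} and the triangle inequality give $\|a_jv_j\|\le\|v_{j-1}\|+\|v_{j+1}\|\le 2\|v_j\|$, and equality is excluded since $v_{j-1},v_j,v_{j+1}$ are not parallel, so $a_j\in\{0,\pm1\}$ (Lemma~\ref{norm}). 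Thus a reducible index always exists and a plain induction on $d$ suffices; without this (or some substitute), your proposed induction on $(d,\sum_i|a_i|)$ is not an argument but a hope, since you never show that a reducing move becomes available or that inserting inverse moves does not spoil the complexity.

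Second, your invariance claims are false exactly in the delicate case. If $\e_{i-1}=\e_i=a_i=\pm1$, then $v_i$ does \emph{not} lie in the wedge from $v_{i-1}$ to $v_{i+1}$ and deleting $v_i$ changes the rotation number by $a_i$: e.g.\ for $(1,0),(0,1),(-1,-1)$ one has $a_2=1$, $\e_1=\e_2=1$, and removing $(0,1)$ replaces a total turn of $5\pi/4$ by a turn of $-3\pi/4$. In the same case the quantity $\frac{1}{12}\big(\sum a_i+3\sum\e_i\big)$ also jumps by $a_i$ rather than being compensated as you claim (compare \eqref{r-10}); the correct statement is that the rotation number and this quantity change by the \emph{same} amount, and it is this matching of jumps (also in the $a_i=0$ case with $\e_{i-1}=\e_i$, where $v_{i+1}=-v_{i-1}$ — a configuration covered by neither of your moves, and where both quantities can jump by $\pm1$, see \eqref{r-5}) that the paper verifies case by case. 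So your strategy is salvageable by tracking the difference of the two quantities and adding the missing move, but as written both the reduction step and the invariance step have real errors.
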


For our proof of this theorem, we prepare the following lemma.  
\begin{lemm}\label{norm}
Let $v_1,\ldots,v_d$ be a unimodular sequence and $v_j$ a vector 
whose Euclidean norm is maximal among the vectors in the sequence, where $1 \leq j \leq d$. 
Then $a_j=0$ or $\pm 1$.
\end{lemm}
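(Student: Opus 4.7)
The plan is to rewrite the defining relation \eqref{ai} at index $j$ as
\[
\e_{j-1}v_{j-1}+\e_j v_{j+1}=-a_j v_j,
\]
and then squeeze $|a_j|$ between $0$ and $1$ by means of the triangle inequality, with a separate argument that rules out the equality case $|a_j|=2$.

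First I would take Euclidean norms on both sides. Since $|\e_i|=1$ for every $i$, the triangle inequality gives
\[
|a_j|\,\|v_j\|\;=\;\|\e_{j-1}v_{j-1}+\e_j v_{j+1}\|\;\le\;\|v_{j-1}\|+\|v_{j+1}\|.
\]
Because $v_j$ has maximal norm in the sequence, $\|v_{j-1}\|\le\|v_j\|$ and $\|v_{j+1}\|\le\|v_j\|$, so the right-hand side is at most $2\|v_j\|$. As $v_j\neq\mathbf 0$ (it is part of a basis of $\Z^2$), dividing gives $|a_j|\le 2$.

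The main (in fact only) obstacle is to exclude the case $|a_j|=2$. If this held, then equality would have to occur in the triangle inequality above and we would also need $\|v_{j-1}\|=\|v_{j+1}\|=\|v_j\|$. Equality in the triangle inequality forces $\e_{j-1}v_{j-1}$ and $\e_j v_{j+1}$ to be non-negatively parallel, and equal norms then force
\[
\e_{j-1}v_{j-1}=\e_j v_{j+1},\qquad\text{i.e.}\qquad v_{j-1}=\e_{j-1}\e_j\, v_{j+1}.
\]
I would then split on the sign $\e_{j-1}\e_j$. If $\e_{j-1}\e_j=+1$, then $v_{j-1}=v_{j+1}$; taking determinants against $v_j$ gives $\e_{j-1}=\det(v_{j-1},v_j)=\det(v_{j+1},v_j)=-\e_j$, contradicting $\e_{j-1}\e_j=+1$. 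If $\e_{j-1}\e_j=-1$, then $v_{j-1}=-v_{j+1}$, whence $\e_{j-1}v_{j-1}+\e_j v_{j+1}=(\e_j-\e_{j-1})v_{j+1}$; this equals $-a_jv_j$ with $a_j\neq 0$, forcing $v_{j+1}$ to be a scalar multiple of $v_j$, contrary to $(v_j,v_{j+1})$ being a $\Z^2$-basis.

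Hence $|a_j|\le 1$, which is precisely the conclusion $a_j\in\{0,\pm 1\}$.
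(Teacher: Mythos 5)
Your proof is correct and follows essentially the same route as the paper's: the triangle inequality applied to \eqref{ai} together with the maximality of $\|v_j\|$ gives $|a_j|\le 2$, and the case $|a_j|=2$ is excluded because equality would force $v_j$ to be parallel to $v_{j\pm1}$, contradicting unimodularity. The only difference is that you spell out the equality-case analysis (the sign split on $\e_{j-1}\e_j$) which the paper dismisses in one sentence by noting that $v_{j-1},v_j,v_{j+1}$ are not parallel.
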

\begin{proof}
{It follows from \eqref{ai} and the maximality of the Euclidean norm of $v_j$ that we have} 
\begin{equation} \label{trieq}
 \|a_jv_j\|=\|-\e_{j-1}v_{j-1}-\e_jv_{j+1}\| \leq \|v_{j-1}\|+\|v_{j+1}\| \leq \|v_j\|+\|v_j\|, 
\end{equation}
where $\|\ \|$ denotes the Euclidean norm on $\R^2$.  
{Therefore, $|a_j|\le 1$ or $|a_j|=2$ and the equality holds in \eqref{trieq}.  However, the latter case does not occur because the vectors $v_{j-1}, v_j, v_{j+1}$ are not parallel, proving the lemma.} 
\end{proof}

\begin{proof}[Proof of Theorem \ref{rotation}]
We give a proof by induction on $d$.  

When $d=2$, the rotation number of $v_1,v_2$ is zero while $a_1=a_2=0$ and $\e_1+\e_2=0$.  Therefore the theorem holds in this case.  

When $d=3$, we may assume that $(v_1,v_2)=((1,0),(0,1))$ or $(v_1,v_2)=((0,1),(1,0))$ through an {(orientation preserving)} unimodular transformation on $\R^2$, 
and then $v_3$ is one of $(1,1),(-1,1),(1,-1)$ and $(-1,-1)$.
Now, it is immediate to check that 
the rotation number of each unimodular sequence coincides with \eqref{12}. 

Let $d \geq 4$ and assume that the theorem holds for any unimodular sequence with at most $d-1$ vectors. 
Let $v_j$ be a vector in the unimodular sequence $v_1,\ldots,v_d$ whose Euclidean norm is maximal among the vectors in the sequence. 
Then Lemma \ref{norm} says that $a_j=0$ or $\pm 1$. \\

\underbar{The case where $a_j=0$}, i.e. 
\begin{equation} \label{r-0}
\e_{j-1}v_{j-1}+\e_jv_{j+1}=0.
\end{equation}
In this case, we consider a subsequence $v_1,\ldots,v_{j-2},v_{j+1},\ldots,v_d$ obtained by removing two vectors $v_{j-1}$ and $v_j$ from the given unimodular sequence.  
Since \[|\det(v_{j-2},v_{j+1})|=|\det(v_{j-2},-\e_{j-1}\e_jv_{j-1})|=1,\] the subsequence is also unimodular. 
Set 
\begin{equation} \label{r-1}
v_i'=\begin{cases} v_i \quad&\text{for $1\le i\le j-2$},\\
v_{i+2} \quad&\text{for $j-1\le i\le d-2$}
\end{cases}
\end{equation}
and define $\e_i'$ and $a_i'$ for the unimodular sequence $v_1',\ldots,v_{d-2}'$ similarly to \eqref{ei} and \eqref{ai}, i.e., 
\begin{equation} \label{r-2}
\e_i'=\det(v_i',v_{i+1}'),\quad \e_{i-1}'v_{i-1}'+\e_i'v_{i+1}'+a_i'v_i'=0.
\end{equation}
Then, it follows from \eqref{r-0}, \eqref{r-1}, \eqref{r-2} and \eqref{ei} that   
\begin{equation} \label{r-3}
\e_i'=\begin{cases} \e_i \quad&\text{for $1\le i\le j-3$},\\
-\e_{j-2}\e_{j-1}\e_j \quad&\text{for $i=j-2$},\\
\e_{i+2} \quad&\text{for $j-1\le i\le d-2$}.
\end{cases}
\end{equation}
It also follows from \eqref{r-0}, \eqref{r-1}, \eqref{r-2}, \eqref{r-3} and \eqref{ai} that 
\begin{align*}
a_{j-2}'v_{j-2}=a_{j-2}'v_{j-2}'&=-\e_{j-3}'v_{j-3}'-\e_{j-2}'v_{j-1}' \\
&=-\e_{j-3}v_{j-3}-(-\e_{j-2}\e_{j-1}\e_j)(-\e_{j-1}\e_jv_{j-1}) \\
&=-\e_{j-3}v_{j-3}-\e_{j-2}v_{j-1}=a_{j-2}v_{j-2}
\end{align*} and \begin{align*}
a_{j-1}'v_{j+1}=a_{j-1}'v_{j-1}'&=-\e_{j-2}'v_{j-2}'-\e_{j-1}'v_j' \\
&=\e_{j-2}\e_{j-1}\e_jv_{j-2}-\e_{j+1}v_{j+2} \\
&=-\e_{j-1}\e_j(-\e_{j-2}v_{j-2}-\e_{j-1}v_j)-\e_jv_j-\e_{j+1}v_{j+2} \\
&=-\e_{j-1}\e_ja_{j-1}v_{j-1}+a_{j+1}v_{j+1}\\
&=a_{j-1}v_{j+1}+a_{j+1}v_{j+1}=(a_{j-1}+a_{j+1})v_{j+1}. 
\end{align*}
Therefore
\begin{equation} \label{r-4}
a_i'=\begin{cases} a_i\quad&\text{for $1\le i\le j-2$},\\
a_{j-1}+a_{j+1} \quad&\text{for $i=j-1$},\\
a_{i+2}\quad&\text{for $j\le i\le d-2$}.
\end{cases}
\end{equation}

Since $a_j=0$, it follows from \eqref{r-3} and \eqref{r-4} that 
\begin{equation} \label{r-5} 
\begin{split}
&\frac{1}{12}\big(\sum_{i=1}^da_i+3\sum_{i=1}^d\e_i\big) -\frac{1}{12}\big(\sum_{i=1}^{d-2}a_i'+3\sum_{i=1}^{d-2}\e_i'\big)\\
=& \frac{1}{4}(\e_{j-2}+\e_{j-1}+\e_j-\e_{j-2}')
= \frac{1}{4}(\e_{j-2}+\e_{j-1}+\e_j+\e_{j-2}\e_{j-1}\e_j)
\end{split}
\end{equation}
which is $+1$ (resp. $-1$) if $\e_{j-2}, \e_{j-1}$ and $\e_j$ are all $+1$ (resp. $-1$), and $0$ otherwise.  
On the other hand, one can see that if the rotation number of $v_1,\ldots,v_d$ is $r$, then that of $v_1',\ldots,v_{d-2}'$ is equal to $r-1$ (resp. $r+1$) if $\e_{j-2}, \e_{j-1}$ and $\e_j$ are all $+1$ (resp. $-1$), and $r$ otherwise.  This together with \eqref{r-5} and the the hypothesis of induction shows that $\frac{1}{12}\big(\sum_{i=1}^da_i+3\sum_{i=1}^d\e_i\big)$ is the rotation number of $v_1,\ldots,v_d$.  \\
 
\underbar{The case where $a_j=\pm 1$}.  
We have 
\begin{equation} \label{r-6}
\e_jv_{j+1}+\e_{j-1}v_{j-1}+a_j v_j=0.
\end{equation}
In this case, we consider a subsequence $v_1,\ldots,v_{j-1},v_{j+1},\ldots,v_d$ obtained by removing the $v_j$ from the given unimodular sequence.  
Since 
\[
|\det(v_{j-1},v_{j+1})|=|\det(v_{j-1},-\e_j\e_{j-1}v_{j-1}-\e_ja_j v_j)|=|\det(v_{j-1},v_j)|=1,
\] 
the subsequence is also unimodular. 
Set 
\begin{equation} \label{r-7}
v_i'=\begin{cases} v_i \quad&\text{for $1\le i\le j-1$},\\
v_{i+1} \quad&\text{for $j\le i\le d-1$}
\end{cases}
\end{equation}
and define $\e_i'$ and $a_i'$ for the unimodular sequence $v_1',\ldots,v_{d-1}'$ as before by \eqref{r-2}. 
Then, it follows from \eqref{r-2}, \eqref{r-6}, \eqref{r-7} and \eqref{ei} that   
\begin{equation} \label{r-8}
\e_i'=\begin{cases} \e_i \quad&\text{for $1\le i\le j-2$},\\
-\e_{j-1}\e_ja_j \quad&\text{for $i=j-1$},\\
\e_{i+1} \quad&\text{for $j\le i\le d-1$}.
\end{cases}
\end{equation}
It also follows from \eqref{r-6}, \eqref{r-7}, \eqref{r-8}, \eqref{r-2} and \eqref{ai} that  
\begin{align*}
a_{j-1}'v_{j-1}=a_{j-1}'v_{j-1}'&=-\e_{j-2}'v_{j-2}'-\e_{j-1}'v_j' \\
&=-\e_{j-2}v_{j-2}+\e_{j-1}\e_ja_j v_{j+1} \\
&=-\e_{j-2}v_{j-2}+\e_{j-1}a_j(-a_j v_j-\e_{j-1}v_{j-1})\\
&=-\e_{j-2}v_{j-2}-\e_{j-1}v_j-a_j v_{j-1}\\
&=a_{j-1}v_{j-1}-a_j v_{j-1}=(a_{j-1}-a_j)v_{j-1}
\end{align*} and \begin{align*}
a_j'v_{j+1}=a_j'v_j'&=-\e_{j-1}'v_{j-1}'-\e_j'v_{j+1}' \\
&=\e_{j-1}\e_ja_j v_{j-1}-\e_{j+1}v_{j+2} \\
&=\e_ja_j (-a_j v_j-\e_jv_{j+1})-\e_{j+1}v_{j+2}\\
&=-a_j v_{j+1}-\e_jv_j-\e_{j+1}v_{j+2}\\
&=-a_j v_{j+1}+a_{j+1}v_{j+1}=(a_{j+1}-a_j)v_{j+1}.  
\end{align*}
Therefore 
\begin{equation} \label{r-9}
a_i'=\begin{cases} a_i\quad&\text{for $1\le i\le j-2$},\\
a_{j-1}-a_j \quad&\text{for $i=j-1$},\\
a_{j+1}-a_j \quad&\text{for $i=j$},\\
a_{i+1}\quad&\text{for $j+1\le i\le d-1$}.
\end{cases}
\end{equation}

It follows from \eqref{r-8} and \eqref{r-9} that 
\begin{equation} \label{r-10} 
\begin{split}
&\frac{1}{12}\big(\sum_{i=1}^da_i+3\sum_{i=1}^d\e_i\big) -\frac{1}{12}\big(\sum_{i=1}^{d-1}a_i'+3\sum_{i=1}^{d-1}\e_i'\big)\\
=& \frac{1}{4}\big(a_j+\e_{j-1}+\e_j-\e_{j-1}'\big)
=\frac{1}{4}\big((1+\e_{j-1}\e_j)a_j+\e_{j-1}+\e_j\big)
\end{split}
\end{equation}
which is $a_j$ if both $\e_{j-1}$ and $\e_j$ are $a_j$, and $0$ otherwise.  
On the other hand, one can see that if the rotation number of $v_1,\ldots,v_d$ is $r$, then that of $v_1',\ldots,v_{d-1}'$ is equal to $r-a_j$ if both $\e_{j-1}$ and $\e_j$ are $a_j$, and $r$ otherwise.  This together with \eqref{r-10} and the the hypothesis of induction shows that $\frac{1}{12}\big(\sum_{i=1}^da_i+3\sum_{i=1}^d\e_i\big)$ is the rotation number of $v_1,\ldots,v_d$.  

This completes the proof of the theorem. 
\end{proof}

\begin{rema}
A different elementary proof to Theorem~\ref{rotation} is given in \cite{ziv12}.
\end{rema}

\section{Generalized twelve-point theorem} \label{sect:2}

Let $P$ be a convex lattice polygon whose only interior lattice point is the origin.  
Then the dual $P^\vee$ to $P$ is also a convex lattice polygon whose only interior lattice point is the origin.  
Let $B(P)$ denote the total number of the lattice points on the boundary of $P$.  The following fact is well known. 

\begin{theo}[Twelve-point theorem]
$B(P)+B(P^\vee)=12$.
\end{theo}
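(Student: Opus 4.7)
The plan is to derive the twelve-point theorem as an almost immediate consequence of Theorem~\ref{rotation}. First I would list the lattice points on $\partial P$ in counterclockwise order as $v_1,\dots,v_d$, so that $d=B(P)$. This sequence is unimodular: consecutive $v_i,v_{i+1}$ bound a boundary segment containing no further lattice point; every $v_i$ is primitive, for otherwise the open segment from $\mathbf{0}$ to $v_i$ would contain an interior lattice point of $P$ distinct from $\mathbf{0}$; and the open triangle $\mathbf{0}v_iv_{i+1}$ lies in the interior of $P$ and so, for the same reason, contains no lattice point. The sequence goes counterclockwise, so $\e_i=1$ for every $i$, and the rotation number equals $1$; Theorem~\ref{rotation} therefore gives
\[
12=\sum_{i=1}^d(a_i+3).
\]

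Next I would split this sum according to whether $v_i$ is a vertex of $P$ or lies in the relative interior of an edge of $P$. In the latter case, $v_{i-1},v_i,v_{i+1}$ are collinear and evenly spaced (both adjacent pairs being unimodular), so $v_{i-1}+v_{i+1}=2v_i$, which via \eqref{ai} forces $a_i=-2$ and $a_i+3=1$. Writing $n$ for the number of vertices of $P$,
\[
12=\bigl(B(P)-n\bigr)+\sum_{v\text{ vertex of }P}(a_v+3).
\]

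The key geometric step is to identify $a_v+3$ with the number of lattice points on the edge $e_v$ of $P^\vee$ dual to $v$. At $v=v_k$ set $d_-=v_k-v_{k-1}$ and $d_+=v_{k+1}-v_k$, so that \eqref{ai} rewrites as $(a_v+2)v=d_--d_+$. Let $n_-,n_+\in P^\vee$ be the vertices of $P^\vee$ dual to the two edges of $P$ meeting at $v$; these are the endpoints of $e_v$ and, since $P$ is reflexive, satisfy $\langle n_\pm,v\rangle=-1$ together with $\langle n_-,d_-\rangle=\langle n_+,d_+\rangle=0$. From $\langle n_+-n_-,v\rangle=0$ the difference $n_+-n_-$ is an integer multiple of the primitive vector $v^\perp$; pairing with $d_-$, using $\det(d_-,v)=-1$ from unimodularity of $(v_{k-1},v_k)$, identifies this multiple as $a_v+2$. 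Convexity of $P$ forces a strict left turn at $v$, i.e.\ $\det(d_-,d_+)=a_v+2>0$, so $e_v$ contains exactly $(a_v+2)+1=a_v+3$ lattice points.

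Finally, each of the $n$ vertices of $P^\vee$ is shared between two consecutive edges $e_v$, so
\[
B(P^\vee)=\sum_{v\text{ vertex of }P}(a_v+3)-n,
\]
and substituting this into the identity displayed just above gives $12=B(P)-n+B(P^\vee)+n=B(P)+B(P^\vee)$. The main obstacle will be the duality computation in the third paragraph: one must fix orientation and sign conventions carefully to confirm that $a_v+2$ equals the lattice length of $e_v$ rather than its negative.
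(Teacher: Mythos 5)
Your proof is correct, but it is organized differently from the paper's. The paper never proves the classical statement directly: it deduces it from the generalized twelve-point theorem (Theorem~\ref{gen12}), whose proof also refines the loop to the unimodular sequence of boundary lattice points and applies Theorem~\ref{rotation}, but identifies $\sum_i(a_i+\e_{i-1}+\e_i)$ with $B(\P^\vee)$ by introducing the dual loop $w_i=(v_i-v_{i-1})/\det(v_{i-1},v_i)$ of \eqref{wi} and computing in \eqref{wminus} and \eqref{wdet} that each dual side has signed lattice length $\e_{i-1}+\e_i+a_i$; the classical theorem is then the winding-number-one case, using that the convex hull of the $w_i$ is the $180$-degree rotation of $P^\vee$. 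You instead specialize Theorem~\ref{rotation} at once to the boundary lattice points of $P$ (all $\e_i=1$, rotation number $1$), split $\sum_i(a_i+3)=12$ into the non-vertex contributions (each $a_i=-2$, totalling $B(P)-n$) and the vertex contributions, and identify $a_v+3$ with the number of lattice points on the dual edge of $P^\vee$ by a supporting-functional computation; your identity $\det(d_-,d_+)=a_v+2>0$ does settle the sign ambiguity you flag, since $\langle n_+-n_-,d_-\rangle=(a_v+2)\langle n_+,v\rangle=-(a_v+2)$ combined with $\det(d_-,v)=-1$ pins the lattice length of $e_v$ down to $a_v+2$. The two identifications are the same fact in different clothing --- your lattice-length claim is the convex-case instance of \eqref{wminus}--\eqref{wdet} --- but your route leans on the reflexivity of $P$ (integrality of $n_\pm$), a genuinely two-dimensional fact that the paper also treats as known background when it asserts that $P^\vee$ is again a lattice polygon with the origin as its only interior lattice point, whereas the paper's dual-loop computation needs neither convexity nor reflexivity and therefore delivers the full legal-loop generalization; what your version buys is a self-contained deduction of the classical theorem from Theorem~\ref{rotation} that avoids the formalism of legal loops and their duals.
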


Several proofs are known for this theorem (\cite{cast12, ce-re-sk05, po-rv00}).  B. Poonen and F. Rodriguez-Villegas give a proof using modular forms in \cite{po-rv00}. They also formulate a generalization of the twelve-point theorem and claim that their proof works in the general setting.  
In this section, we will explain the generalized twelve-point theorem and observe that it follows from Theorem~\ref{rotation}.      

If $P$ is a convex lattice polygon whose only interior lattice point 
is the origin and $v_1,\dots,v_d$ are the vertices of $P$ arranged counterclockwise, 
then every $v_i$ is primitive and the triangle with the vertices ${\bf 0}, v_i$ and $v_{i+1}$ has no lattice point in the interior for each $i$, where $v_{d+1}=v_1$ as usual. 
This observation motivates the following definition, see \cite{po-rv00, cast12}. 

\begin{defi}
A sequence of vectors $\P=(v_1,\dots,v_d)$, where $v_1,\dots,v_d$ are in $\Z^2$ and $d\ge 2$, is called a \emph{legal loop} if every $v_i$ is primitive and whenever $v_i\not=v_{i+1}$, $v_i$ and $v_{i+1}$ are linearly independent (i.e. $v_i\not=-v_{i+1}$) and the triangle with the vertices ${\bf 0}, v_i$ and $v_{i+1}$ has no lattice point in the interior.  We say that a legal loop is \emph{reduced} if $v_i\not=v_{i+1}$ for any $i$.  
A (non-reduced) legal loop $\P$ naturally determines a reduced legal loop, denoted $\P_{red}$, by dropping all the redundant points.   We define the \emph{winding number} of a legal loop $\P=(v_1,\dots,v_d)$ to be the rotation number of the vectors $v_1,\dots,v_d$ around the origin.  
\end{defi} 

Joining successive points in a legal loop $\P=(v_1,\dots,v_d)$ by straight lines forms a lattice polygon which may have a self-intersection.  
A unimodular sequence $v_1,\dots,v_d$ determines a reduced legal loop.  
Conversely, a reduced legal loop $\P=(v_1,\dots,v_d)$ determines a unimodular sequence by adding all the lattice points on the line segment $v_iv_{i+1}$ (called a \emph{side} of $\P$)  connecting $v_i$ and $v_{i+1}$ for every $i$.  
To each side $v_iv_{i+1}$ with $v_i\not=v_{i+1}$, we assign the sign of $\det(v_i,v_{i+1})$, denoted $\sgn(v_i,v_{i+1})$.     

For a reduced legal loop $\P=(v_1,\dots,v_d)$, 
we set  
\begin{equation} \label{wi}
w_i=\frac{v_i-v_{i-1}}{\det(v_{i-1},v_i)} \quad\text{for $i=1,\dots,d$}, 
\end{equation}  
where $v_0=v_d$. 
{Note that $w_i$ is integral and primitive and define $\P^\vee=(w_1,\dots,w_d)$ following \cite{po-rv00} (see also \cite{cast12}).} 
It is not difficult to see that $\P^\vee=(w_1,\dots,w_d)$ is again a legal loop 
although it may not be reduced (see the proof of Theorem~\ref{gen12} below).  
If a legal loop $\P$ is not reduced, then we define $\P^\vee$ to be $(\P_{red})^\vee$.  
When the vectors $v_1,\dots,v_d$ are the vertices of a convex lattice polygon $P$ with only the origin as an interior lattice point and are arranged in counterclockwise order, 
the sequence $w_1,\dots,w_d$ is also in counterclockwise order and the convex hull of $w_1,\dots,w_d$ is the 180 degree rotation of the polygon  $P^\vee$ dual to $P$.  

\begin{exam}\label{ex2}
Let us consider $\P$ and $\Q$ described in Example \ref{ex1}. Then those are reduced legal loops. \\
(a) We have \[\P^\vee=((2,1),(-1,1),(-1,-1),(1,-1),(1,0)).\] 

\begin{figure}[htb!]
\centering
\includegraphics[scale=0.72]{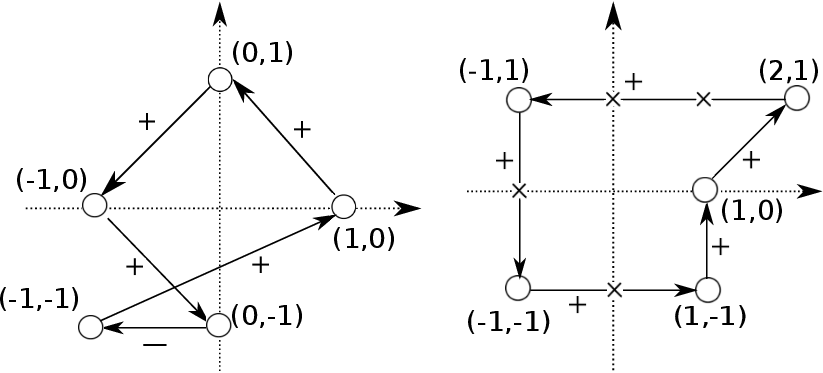}
\caption{legal loops $\P$ and $\P^\vee$ and sides with signs}\label{legal1}
\end{figure}

(b) Similarly, \[\Q^\vee=((0,1),(-2,1),(1,-2),(1,2),(-2,-1),(2,-1)).\] 

\begin{figure}[htb!]
\centering
\includegraphics[scale=0.8]{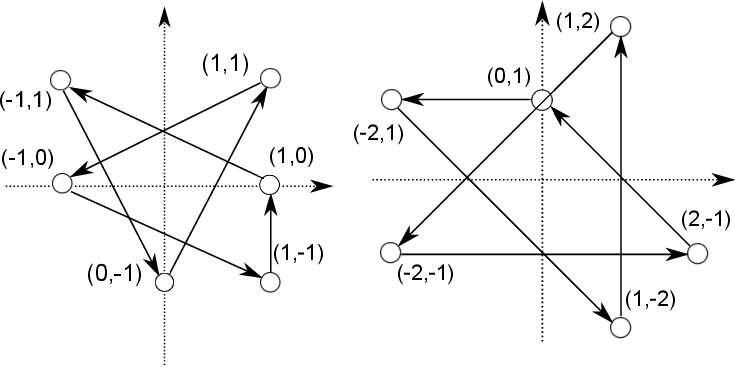}
\caption{leagl loops $\Q$ and $\Q^\vee$}\label{legal2}
\end{figure}
\end{exam}

\begin{defi}
Let $|v_iv_{i+1}|$ be the number of lattice points on the side $v_iv_{i+1}$ minus $1$, so $|v_iv_{i+1}|=0$ when $v_i=v_{i+1}$.  Then we define 
\[
B(\P)=\sum_{i=1}^d\sgn(v_i,v_{i+1})|v_iv_{i+1}|.
\]
Clearly, $B(\P)=B(\P_{red})$. 
\end{defi}

\begin{theo}[Generalized twelve-point theorem {\cite{po-rv00}}]\label{gen12}
Let $\P$ be a legal loop and let $r$ be the winding number of $\P$. Then 
$B(\P)+B(\P^\vee)=12 r$. 
\end{theo}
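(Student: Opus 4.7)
The plan is to deduce Theorem~\ref{gen12} directly from Theorem~\ref{rotation}. Since $B(\P)=B(\P_{red})$, $\P^\vee=(\P_{red})^\vee$, and the winding number is unchanged upon passing to $\P_{red}$, I may assume $\P=(v_1,\dots,v_d)$ is reduced. Inserting all intermediate lattice points on each side $v_iv_{i+1}$ extends $\P$ to a unimodular sequence $u_1,\dots,u_D$; the insertion preserves the rotation number around the origin (each side avoids the origin because $v_i,v_{i+1}$ are linearly independent), so if $\e_k',a_k'$ denote the integers attached to $u_1,\dots,u_D$ as in \eqref{ei} and \eqref{ai}, then Theorem~\ref{rotation} yields $12r=\sum_ka_k'+3\sum_k\e_k'$. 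The strategy is to establish $\sum_k\e_k'=B(\P)$ and $\sum_ka_k'=B(\P^\vee)-2B(\P)$.

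The first identity is immediate: every transition $(u_k,u_{k+1})$ lies on a single side $v_iv_{i+1}$, each such side contributes exactly $|v_iv_{i+1}|$ transitions, and each has $\det(u_k,u_{k+1})=\sgn(v_i,v_{i+1})$. For the second sum I split according to whether $u_k$ is an interior lattice point of some side or a vertex of $\P$. At an interior point of $v_iv_{i+1}$, the vectors $u_{k-1},u_k,u_{k+1}$ are collinear with $u_{k-1}+u_{k+1}=2u_k$ and $\e_{k-1}'=\e_k'=\sgn(v_i,v_{i+1})$, so \eqref{ai} forces $a_k'=-2\sgn(v_i,v_{i+1})$. Summed over the $|v_iv_{i+1}|-1$ interior points of each side, this gives a total contribution of $-2B(\P)+2\sum_i\sgn(v_i,v_{i+1})$.

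The main obstacle is the vertex case, where the dual loop $\P^\vee$ must enter the picture. At $u_k=v_i$, substituting $u_{k-1}=v_i-(v_i-v_{i-1})/|v_{i-1}v_i|$ and $u_{k+1}=v_i+(v_{i+1}-v_i)/|v_iv_{i+1}|$ into \eqref{ai} and using \eqref{wi} together with $|\det(v_{i-1},v_i)|=|v_{i-1}v_i|$ (which follows from $v_{i-1},v_i$ being primitive and the triangle having no interior lattice points, via Pick), the relation rearranges to
\[
\bigl(\sgn(v_{i-1},v_i)+\sgn(v_i,v_{i+1})+a_k'\bigr)v_i=w_i-w_{i+1}.
\]
A direct computation from \eqref{wi} gives $\det(w_i,v_i)=-1$, so writing $w_i-w_{i+1}=c_iv_i$ (possible since $v_i$ is primitive) yields $c_i=\det(w_i,w_{i+1})$; the same argument shows $|w_iw_{i+1}|=|c_i|$, verifying in passing that $\P^\vee$ is itself a legal loop and that $c_i=\sgn(w_i,w_{i+1})|w_iw_{i+1}|$. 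Hence at the vertex $v_i$,
\[
a_k'=\sgn(w_i,w_{i+1})|w_iw_{i+1}|-\sgn(v_{i-1},v_i)-\sgn(v_i,v_{i+1}).
\]
Summing over $i$ and using $\sum_i\sgn(v_{i-1},v_i)=\sum_i\sgn(v_i,v_{i+1})$ (cyclic reindexing) gives vertex contribution $B(\P^\vee)-2\sum_i\sgn(v_i,v_{i+1})$, which combined with the interior contribution produces $\sum_ka_k'=B(\P^\vee)-2B(\P)$. Substituting into Theorem~\ref{rotation} then gives $12r=\bigl(B(\P^\vee)-2B(\P)\bigr)+3B(\P)=B(\P)+B(\P^\vee)$, as required.
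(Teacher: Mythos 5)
Your proposal is correct and takes essentially the same route as the paper: both deduce the theorem from Theorem~\ref{rotation} applied to the unimodular refinement of $\P$, with the same key computation that $w_i-w_{i+1}$ is an integral multiple of $v_i$ whose coefficient equals $\det(w_i,w_{i+1})=\e_{i-1}+\e_i+a_i$, so that $B(\P^\vee)$ picks up exactly $\sum_i(a_i+\e_{i-1}+\e_i)$. The only difference is organizational: the paper first replaces $\P$ by its refinement (invoking $B(\P)=B(\Q)$ and $(\P^\vee)_{red}=(\Q^\vee)_{red}$) so every side has lattice length one, whereas you keep the original $w_i$ and account for the inserted points directly via $a_k'=-2\sgn(v_i,v_{i+1})$, which in effect verifies explicitly the step the paper dispatches as \lq\lq clearly''.
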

\begin{proof}
We may assume that $\P$ is reduced.  As remarked before, the reduced legal loop $\P=(v_1,\dots,v_d)$ determines a unimodular sequence by adding all the lattice points on the side $v_iv_{i+1}$ for every $i$, and the unimodular sequence determines a reduced legal loop, say $\mathcal Q$.  Clearly, $B(\P)=B(\mathcal Q)$ and $(\P^\vee)_{red}=(\mathcal Q^\vee)_{red}$.  
In the sequel, we may assume that the vectors $v_1,\dots,v_d$ in our legal loop $\P$ form a unimodular sequence.  

Since the sequence $v_1,\dots,v_d$ is unimodular, $\sgn(v_i,v_{i+1})=\e_i$ and $|v_iv_{i+1}|=1$ for any $i$.  Therefore 
\begin{equation} \label{lP}
B(\P)=\sum_{i=1}^d\sgn(v_i,v_{i+1})|v_iv_{i+1}|=\sum_{i=1}^d\e_i.
\end{equation}

On the other hand, it follows from \eqref{wi} and \eqref{ai} that 
\begin{equation} \label{wminus}
\begin{split}
w_{i+1}-w_i&=\e_i(v_{i+1}-v_i)-\e_{i-1}(v_i-v_{i-1})\\
&=\e_iv_{i+1}+\e_{i-1}v_{i-1}-(\e_i+\e_{i-1})v_i\\
&=-(a_i+\e_i+\e_{i-1})v_i
\end{split}
\end{equation}
and that 
\begin{equation} \label{wdet}
\begin{split}
\det(w_i,w_{i+1})&=\e_{i-1}\e_i\det(v_i-v_{i-1},v_{i+1}-v_i)\\
&=\e_{i-1}\e_i\det(v_i-v_{i-1},-\e_{i-1}\e_iv_{i-1}-\e_ia_iv_i-v_i)\\
&=\e_{i-1}\e_i\big(\det(v_i,-\e_{i-1}\e_iv_{i-1})+\det(-v_{i-1},-\e_ia_iv_i-v_i)\big)\\
&=\e_{i-1}+a_i+\e_i.
\end{split}
\end{equation}
Since $v_i$ is primitive, \eqref{wminus} shows that $|w_iw_{i+1}|=|\e_{i-1}+\e_i+a_i|$ and this together with \eqref{wdet} shows that 
\[
\sgn(w_i,w_{i+1})|w_iw_{i+1}|=\e_{i-1}+\e_i+a_i.
\]
Therefore  
\begin{equation} \label{lPv}
B(\P^\vee)=\sum_{i=1}^d\sgn(w_i,w_{i+1})|w_iw_{i+1}|=\sum_{i=1}^d(\e_{i-1}+\e_i+a_i).
\end{equation}

It follows from \eqref{lP} and \eqref{lPv} that 
\[
\begin{split}
B(\P)+B(\P^\vee)&=\sum_{i=1}^d\e_i+\sum_{i=1}^d(\e_{i-1}+\e_i+a_i)\\
&=3\sum_{i=1}^d\e_i+\sum_{i=1}^da_i,
\end{split}
\]
which is equal to $12r$ by Theorem~\ref{rotation}, proving the theorem. 
\end{proof}

\begin{exam}
Let us consider again the legal loops $\P$ and $\Q$ in the previous example. \\
(a) On the one hand, $B(\P)=1+1+1-1+1=3$. On the other hand, $B(\P^\vee)=3+2+2+1+1=9$. 
Thus we have $B(\P)+B(\P^\vee)=12$. 
The left-hand side (resp. right-hand side) of Figure \ref{legal1} depicted in Example \ref{ex2} 
shows $\P$ (resp. $\P^\vee$) together with signs, 
where the symbols $\circ$ and $\times$ stand for lattice points in $\Z^2$. \\
(b) On the one hand, $B(\Q)=6$. On the other hand, $B(\Q^\vee)=18$. Hence, $B(\Q)+B(\Q^\vee)=24$. 
The left-hand side (resp. right-hand side) of Figure \ref{legal2} 
shows $\Q$ (resp. $\Q^\vee$). Note that the signs on the sides of $\Q$ and $\Q^\vee$ are all $+$. 

\end{exam}

\begin{rema}
Kasprzyk and Nill (\cite[Corollary 2.7]{ka-ni11}) point out that the generalized twelve-point theorem can further be generalized to what are called \emph{$\ell$-reflexive loops}, where $\ell$ is a positive integer and a $1$-reflexive loop is a {legal loop}. 
\end{rema}

\section{Generalized Pick's formula for lattice multi-polygons}\label{Ehrhart}

In this section, we introduce the notion of lattice multi-polygon and 
state a generalized Pick's formula for lattice multi-polygons which is essentially proved in \cite[Theorem 8.1]{masu99}.  
Moreover, from this formula, we can define the Ehrhart polynomials for lattice multi-polygons. 

We begin with the well-known Pick's formula for lattice polygons (\cite{pick99}).  Let $P$ be a (not necessarily convex) lattice polygon, $\partial P$ the boundary of $P$ and $P^\circ=P\backslash \partial P$.  We define
\[
A(P)=\text{the area of $P$},\quad B(P)=|\partial P\cap \Z^2|,\quad \sharp P^\circ=|P^\circ\cap \Z^2|,
\]
where $|X|$ denotes the cardinality of a finite set $X$.  Then Pick's formula says that 
\begin{equation} \label{eq:3.1}
A(P)=\sharp P^\circ+\frac{1}{2}B(P)-1.  
\end{equation}
We may rewrite \eqref{eq:3.1} as 
\[
\sharp P^\circ=A(P)-\frac{1}{2}B(P)+1\quad\text{or}\quad \sharp P=A(P)+\frac{1}{2}B(P)+1, 
\]
where $\sharp P=|P\cap\Z^2|$.  

In \cite{Grum-She-93}, the notion of \emph{shaven lattice polygon} is introduced and Pick's formula \eqref{eq:3.1} is generalized to shaven lattice polygons.  The generalization of Pick's formula discussed in \cite{masu99} is similar to \cite{Grum-She-93} but a bit more general, which we shall explain.  

Let $\P=(v_1,\dots,v_d)$ be a sequence of points $v_1,\dots,v_d$ in $\Z^2$. 
One may regard $\P$ as an \emph{oriented} piecewise linear loop by connecting all successive points from $v_i$ to $v_{i+1}$ in $\P$ by straight lines as before, where $v_{d+1}=v_1$. 
To each side $v_iv_{i+1}$, we assign a sign $+$ or $-$, denoted $\ep(v_iv_{i+1})$.  In Section~\ref{sect:2}, we assigned the $\sgn(v_i,v_{i+1})$, which is the sign of $\det(v_i,v_{i+1})$, to $v_iv_{i+1}$ but $\ep(v_iv_{i+1})$ may be different from $\sgn(v_i,v_{i+1})$.  However we require that the assignment $\ep$ of signs satisfy the following condition $(\star)$: 
\begin{itemize}
\item[$(\star)$] when there are consecutive three points $v_{i-1},v_i,v_{i+1}$ in $\P$ lying on a line, we have 
\begin{enumerate}
\item $\ep(v_{i-1}v_i)=\ep(v_iv_{i+1})$ if $v_i$ is in between $v_{i-1}$ and $v_{i+1}$; 
\item $\ep(v_{i-1}v_i) \not= \ep(v_iv_{i+1})$ if $v_{i-1}$ lies on $v_iv_{i+1}$ or $v_{i+1}$ lies on $v_{i-1}v_i$. 
\end{enumerate}
\end{itemize}

A \emph{lattice multi-polygon} is $\P$ equipped with the assignment $\ep$ satisfying $(\star)$. We need to express a lattice multi-polygon as a pair $(\P,\ep)$ to be precise, but we omit $\ep$ and express a lattice multi-polygon simply as $\P$ in the following.  Reduced legal loops introduced in Section~\ref{sect:2} are lattice multi-polygons. 

\begin{rema}
Lattice multi-polygons such that consecutive three points are not on a same line are introduced in \cite[Section 8]{masu99}.  But if we require the condition $(\star)$, then the argument developed there works for any lattice multi-polygon.     
A shaven polygon introduced in \cite{Grum-She-93} is a lattice multi-polygon with $\ep=+$ in our terminology, so that $v_i$ is allowed to lie on the line segment $v_{i-1}v_{i+1}$ but $v_{i-1}$ (resp. $v_{i+1}$) is not allowed to lie on $v_iv_{i+1}$ (resp. $v_{i-1}v_i$) by (2) of $(\star)$, i.e., there is no \emph{whisker}.     
\end{rema}

Let $\P$ be a multi-polygon with a sign assignment $\ep$.  We think of $\P$ as an oriented piecewise linear loop with signs attached to sides.  
For $i=1,\ldots,d$, let $n_i$ denote a normal vector to each side $v_iv_{i+1}$ such that 
the 90 degree rotation of $\ep(v_iv_{i+1})n_i$ has the same direction as $v_iv_{i+1}$. 
The winding number of $\P$ around a point $v \in \R^2 \setminus \P$, 
denoted $d_{\P}(v)$, is a locally constant function on $\R^2 \setminus \P$, 
where $\R^2 \setminus \P$ means the set of elements in $\R^2$ which does not belong to any side of $\P$. 

Following \cite[Section 8]{masu99}, we define 
\[
\begin{split}
A(\P)&:=\int_{v \in \R^2 \setminus \P}d_{\P}(v) dv, \\
B(\P)&:=\sum_{i=1}^d \ep(v_iv_{i+1}) |v_iv_{i+1}|, \\
C(\P)&:=\text{ the rotation number of the sequence of } n_1,\ldots,n_d. 
\end{split}
\]
Notice that $A(\P)$ and $B(\P)$ can be 0 or negative. 
If $\P$ arises from a lattice polygon $P$, namely $\P$ is a sequence of the vertices of $P$ 
arranged in counterclockwise order and $\ep=+$, then $A(\P)=A(P)$,  $B(\P)=B(P)$ and $C(\P)=1$. 

Now, we define $\sharp \P$ in such a way that if $\P$ arises from a lattice polygon $P$, then $\sharp \P=\sharp P$.  Let $\P_+$ be an oriented loop obtained from $\P$ by pushing each side $v_iv_{i+1}$ slightly in the direction of $n_i$. Since $\P$ satisfies the condition $(\star)$, $\P_+$ misses all lattice points, so the winding numbers $d_{\P_+}(u)$ can be defined for any lattice point $u$ using $\P_+$.  Then we define  
\[
\sharp \P := \sum_{u \in \Z^2} d_{\P_+}(u). 
\]

As remarked before, lattice multi-polygons treated in \cite{masu99} are required that 
consecutive three points $v_{i-1}, v_i, v_{i+1}$ do not lie on a same line.  
But if the sign assignment $\ep$ satisfies the condition $(\star)$ above, 
then the argument developed in \cite[Section 8]{masu99} works and 
we obtain the following generalized Pick's formula for lattice multi-polygons as follows.

\begin{theo}[cf.  {\cite[Theorem 8.1]{masu99}}] \label{theo:3.1}
$\sharp \P=A(\P)+\frac{1}{2}B(\P)+C(\P)$.
\end{theo}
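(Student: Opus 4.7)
The plan is to follow the approach in \cite{masu99}, which evaluates $\sharp\P$ by localizing the computation near each lattice point. The starting point is to partition $\R^2$ into open unit squares $Q_u=u+(-\tfrac12,\tfrac12)^2$ centered at each $u\in\Z^2$. Since $d_{\P_+}$ and $d_\P$ agree outside the measure-zero set $\P\cup\P_+$, one has
\[
A(\P)=\int_{\R^2}d_\P\,dv=\sum_{u\in\Z^2}\int_{Q_u}d_{\P_+}\,dv,
\]
so the quantity to control is
\[
\sharp\P-A(\P)=\sum_{u\in\Z^2}\Big(d_{\P_+}(u)-\int_{Q_u}d_{\P_+}\,dv\Big),
\]
which, for unit squares small enough relative to the geometry of $\P$ (after a preliminary rescaling if necessary), is supported only on lattice points that either lie on the relative interior of a side of $\P$ or are vertices of $\P$.

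Next I would carry out the local analysis at each such $u$. For $u$ in the relative interior of a side $v_iv_{i+1}$, shrinking $Q_u$ so that only the perturbation of this side meets $Q_u$ reduces the computation to a one-dimensional jump of $d_{\P_+}$ across the perturbed side, and shows that the discrepancy equals $\tfrac12\,\ep(v_iv_{i+1})$. Summing these contributions over all interior lattice points on all sides accounts for $\tfrac12$ of the signed count of such points.

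At a vertex $v_i$, the pushed-off loop $\P_+$ near $v_i$ consists of the two shifted adjacent sides together with a small connecting arc, and a similar local computation decomposes the discrepancy at $v_i$ into the two ``half-edge'' boundary contributions $\tfrac12\ep(v_{i-1}v_i)+\tfrac12\ep(v_iv_{i+1})$, which together with Step~2 complete the full boundary sum to $\tfrac12 B(\P)$, plus a ``turning'' term equal to the signed angle from $n_{i-1}$ to $n_i$, divided by $2\pi$. Summing the turning terms over all $i$ yields precisely the rotation number $C(\P)$ of the sequence $n_1,\dots,n_d$ by the definition of rotation number, giving $\sharp\P-A(\P)=\tfrac12B(\P)+C(\P)$ as desired.

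The principal obstacle is the vertex analysis under the weakened condition $(\star)$, which allows three consecutive collinear points $v_{i-1},v_i,v_{i+1}$. In that case the two adjacent normals $n_{i-1}$ and $n_i$ are either equal or antipodal, and the turning angle at $v_i$ is a priori ambiguous between $0$ and $\pm\pi$. Condition $(\star)$ is precisely what forces a consistent choice: in case~(1) the two signs coincide and the turning vanishes, while in case~(2) the signs disagree and the turning is $\pm\pi$. Verifying that these contributions assemble correctly so that the signed angles sum to $2\pi\,C(\P)$, and so that the argument of \cite[Section 8]{masu99} goes through verbatim for all lattice multi-polygons satisfying $(\star)$, is the key technical step.
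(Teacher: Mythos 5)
Your proposal attempts to prove the formula from scratch by a local winding-number computation, whereas the paper does something much lighter: it only treats the new cases allowed by condition $(\star)$ (three consecutive collinear points), removing the middle point $v_i$ of each collinear triple, reassigning signs, and checking that $\sharp\P$, $A(\P)$, $B(\P)$, $C(\P)$ are all unchanged; the resulting multi-polygon has no collinear triples, so \cite[Theorem 8.1]{masu99} applies and the general case follows. You are therefore implicitly re-proving \cite[Theorem 8.1]{masu99} itself, and it is in that part that your argument has a genuine gap. The localization step does not work as stated. First, $d_{\P_+}$ and $d_\P$ do not agree off a measure-zero set: they differ on the strips swept between each side and its pushed copy (this is repairable by letting the push tend to $0$, since $\sharp\P$ is independent of the push). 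More seriously, the discrepancy $d_{\P_+}(u)-\int_{Q_u}d_{\P_+}\,dv$ is \emph{not} supported on lattice points lying on $\P$: any lattice point within distance $\frac12$ of a side, but not on $\P$, has nonzero discrepancy (part of its unit square lies on the other side of the edge). Shrinking the squares $Q_u$, or ``rescaling,'' does not help, because then $\sum_u\int_{Q_u}d_{\P_+}\,dv$ is no longer $A(\P)$; so the reduction of $\sharp\P-A(\P)$ to purely local side/vertex contributions is never established.

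Moreover, even at the points you do consider, the claimed local values are incorrect in general. At a vertex the limiting discrepancy equals $1$ minus the area of the unit square lying in the local cone (weighted by winding numbers), and this is \emph{not} ``edge terms plus turning angle over $2\pi$'': for the cone spanned by $(1,0)$ and $(1,2)$ the square-area is $\frac{3}{16}$ while the angle over $2\pi$ is $\frac{\arctan 2}{2\pi}\approx 0.176$. The equality you want only emerges after summing over all lattice points in a neighborhood of an entire side, where the errors cancel by a symmetry/counting argument that your outline does not supply. There is also a bookkeeping slip: assigning $\frac12\ep(v_{i-1}v_i)+\frac12\ep(v_iv_{i+1})$ to each vertex, on top of $\frac12\ep$ per interior lattice point of each side, sums to $\frac12 B(\P)+\frac12\sum_i\ep(v_iv_{i+1})$, overshooting the target (the vertex shares should be quarters, not halves). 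Your treatment of condition $(\star)$ at collinear triples is the right instinct, but it is attached to a main argument that is not yet a proof; either adopt the paper's reduction to \cite[Theorem 8.1]{masu99}, or, if you want a self-contained proof, replace the naive square-by-square discrepancy with an additivity argument (e.g.\ deletion of points and reduction to primitive triangles) for which the local contributions genuinely decouple.
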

\begin{proof}
Let $\P=(v_1,\ldots,v_d)$ be a lattice multi-polygon. 
Similarly to the proof of \cite[Theorem 8.1]{masu99}, 
we construct the multi-fan from $\P$ and apply the results in \cite[Section 7]{masu99}.

Assume that $\P$ contains consecutive three points lying on a line, say, $v_1, v_2$ and $v_3$. 
Let $n_i$ denote the primitive normal vector to each side $v_iv_{i+1}$ such that 
$90$ degree rotation of $\epsilon(v_iv_{i+1})n_i$ has the same direction as $v_iv_{i+1}$. 
Then the condition $(\star)$ implies that $n_1=n_2$. 
Let $n_{12}$ denote the primitive vector such that $n_{12}$ is orthogonal to $n_1$. 
We add the new lattice vector $n_{12}$ between $n_1$ and $n_2$, 
and the remaining method for the construction of multi-fan associated with $\P$ 
is the same as in the proof of \cite[Theorem 8.1]{masu99}. 
Now, by applying the results in \cite[Section 7]{masu99}, we can see that the required formula also holds for $\P$. 
\end{proof}

If we define $\P^\circ$ to be $\P$ with $-\ep$ as a sign assignment, then 
\begin{equation} \label{eq:3.2}
\sharp\P^\circ=A(\P)-\frac{1}{2}B(\P)+C(\P)
\end{equation}
and if $\P$ arises from a lattice polygon $P$, then $\sharp \P^\circ=\sharp P^\circ$.   

Given a positive integer $m$, we dilate $\P$ by $m$ times, denoted $m\P$, in other words, if $\P$ is $(v_1,\dots,v_d)$ with a sign assignment $\ep$, then $m\P$ is $(mv_1,\dots,mv_d)$ with $\ep(v_iv_{i+1})$ as the sign of the side $mv_imv_{i+1}$ of $m\P$ for each $i$. Then we have 
\begin{equation}\label{eq1}
\sharp (m \P) = A(\P) m^2 + \frac{1}{2}B(\P) m + C(\P), 
\end{equation}
that is, $\sharp (m \P)$ is a polynomial in $m$ of degree at most 2 
whose coefficients are as above. Moreover, the equality 
\begin{equation*}\label{eq2}
\sharp (m \P^\circ) = A(\P) m^2 - \frac{1}{2}B(\P) m + C(\P) = (-1)^2 \sharp (-m \P) 
\end{equation*}
holds, so that the reciprocity holds for lattice multi-polygons. 
We call the polynomial \eqref{eq1} the \emph{Ehrhart polynomial} of a lattice multi-polygon $\P$. 
We refer the reader to \cite{BeckRobins} for the introduction to the theory of Ehrhart polynomials of general convex lattice polytopes.

\begin{rema}
In \cite{ha-ma-03}, lattice multi-polytopes $\P$ of dimension $n$ are defined and 
it is proved that $\sharp (m \P)$ is a polynomial in $m$ of degree at most $n$ 
which satisfies $\sharp (m \P^\circ)=(-1)^n \sharp (-m \P)$ 
whose leading coefficient and constant term have similar geometrical meanings to the 2-dimensional case above. 
\end{rema}

\section{Ehrhart polynomials of lattice multi-polygons} \label{sect:4}

In this section, we will discuss which polynomials appear as the Ehrhart polynomials of lattice multi-polygons. 
By virtue of \eqref{eq1}, studying whether a polynomial $am^2+bm+c$ is the Ehrhart polynomial of some lattice multi-polygon 
is equivalent to classifying the triple $(A(\P),\frac{1}{2}B(\P),C(\P))$ for lattice multi-polygons $\P$. 
In the sequel, we will discuss this triple for lattice multi-polygons and their natural subfamilies. 

If the triple $(a,b,c)$ is equal to $(A(\P),\frac{1}{2}B(\P),C(\P))$ of some lattice multi-polygon $\P$, then $(a,b,c)$ must be in the set 
\[
\A=\left\{(a,b,c) \in \frac{1}{2}\Z \times \frac{1}{2}\Z \times \Z : a+b \in \Z \right\}
\] 
because 
\[
B(\P)\in\Z,\quad C(\P)\in \Z,\quad A(\P) + \frac{1}{2}B(\P) + C(\P)=\sharp \P \in \Z.
\]
The following theorem shows that this condition is sufficient. 
  
\begin{theo}\label{mpolygon}
The triple $(a,b,c)$ is equal to $(A(\P),\frac{1}{2}B(\P),C(\P))$ of some lattice multi-polygon $\P$ if and only if $(a,b,c) \in \A$. 
\end{theo}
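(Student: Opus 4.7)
The necessity direction is already shown in the paragraph immediately preceding the theorem, so the task is sufficiency: given $(a,b,c)\in\A$, construct a lattice multi-polygon $\P$ with $(A(\P),\tfrac12 B(\P),C(\P))=(a,b,c)$. My plan is to show that the set $\mathcal{S}\subseteq\A$ of realizable triples is closed under addition via a ``bridged sum'' of multi-polygons, and then to exhibit explicit multi-polygons realizing a generating set of $\A$ as an abelian group.

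For the additivity, given two lattice multi-polygons $\P$ and $\Q$ supported in disjoint bounded regions of $\R^2$, define their bridged sum $\P\#\Q$ as follows: choose a vertex $p$ of $\P$ and $q$ of $\Q$, connect them by a lattice path $p=q_0,q_1,\dots,q_m=q$ avoiding both loops, and form the single loop that traverses $\P$ up to $p$, then proceeds $q_0\to q_1\to\cdots\to q_m$, traverses all of $\Q$, returns $q_m\to q_{m-1}\to\cdots\to q_0$, and finally completes $\P$. Assign sign $+$ to the outgoing bridge edges and $-$ to the returning ones, keeping the original signs of $\P$ and $\Q$. A case check confirms that $(\star)$ is maintained, with generic placement of the attachment points used to avoid forbidden colinearities. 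The key observation is that a bridge edge of direction $v$ with sign $+$ and the reverse edge of direction $-v$ with sign $-$ share the same unit normal, so the bridge normals form a constant stretch in the normal sequence of $\P\#\Q$. This immediately gives $A(\P\#\Q)=A(\P)+A(\Q)$ (the bridge has zero area) and $B(\P\#\Q)=B(\P)+B(\Q)$ (pairwise cancellation), and, provided the bridge's direction is chosen so that its common normal lies on the short arcs from the adjacent polygon normals at each attachment vertex, also $C(\P\#\Q)=C(\P)+C(\Q)$. Hence $\mathcal{S}$ is closed under addition.

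Next, $\A$ is generated as an abelian group by the four elements $(1,0,0)$, $(0,1,0)$, $(0,0,1)$, $(1/2,1/2,0)$: for $(a,b,c)\in\A$ the integers $2a$ and $2b$ have the same parity, so either $a,b\in\Z$ or $a-1/2,b-1/2\in\Z$, giving an explicit decomposition. I realize each generator by an explicit multi-polygon whose triple is a direct computation from the definitions: the unit triangle $((0,0),(1,0),(0,1))$ with signs $(+,+,-)$ realizes $(1/2,1/2,0)$; the triangle $((0,0),(1,0),(0,2))$ with signs $(+,+,-)$ realizes $(1,0,0)$; and the unit square traversed clockwise $((0,0),(0,1),(1,1),(1,0))$ with alternating signs $(+,-,+,-)$ realizes $(-1,0,1)$. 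Reversing the orientation of a lattice multi-polygon while keeping the signs on each side flips $A$ and $C$ and preserves $B$, so the first triangle reversed realizes $(-1/2,1/2,0)$. Bridged sums then yield $(0,1,0)=(1/2,1/2,0)+(-1/2,1/2,0)$ and $(0,0,1)=(-1,0,1)+(1,0,0)$, and combining the four generators by repeated bridged sums realizes any $(a,b,c)\in\A$.

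The main obstacle I anticipate is establishing the \emph{exact} additivity of $C$ under the bridged sum. The rotation number uses the shortest-arc (angle less than $\pi$) convention, which is sensitive to where the bridge's common normal sits relative to the adjacent polygon normals. Constant stretches of equal normals contribute zero rotation, but at each attachment vertex the two transitions into and out of the bridge must sum correctly to $\mathrm{rot}(N_i\to B)+\mathrm{rot}(B\to N_{i+1})=\mathrm{rot}(N_i\to N_{i+1})$, replacing the original single transition broken by the attachment. Generic choice of the bridge direction places $B$ strictly between $N_i$ and $N_{i+1}$ on the short arc, guaranteeing this identity, and a symmetric argument works at the $\Q$-end. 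The remaining work (checking the triples of the explicit generators and the group-theoretic generation of $\A$) is routine.
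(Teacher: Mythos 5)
There is a genuine gap in your sufficiency argument: nothing in your toolkit can ever produce a triple with negative second coordinate, yet $\A$ contains such triples, e.g.\ $(0,-1,0)$ and $(-\tfrac12,-\tfrac12,0)$. Concretely, your three explicit blocks have $\tfrac12 B$ equal to $\tfrac12$, $0$, $0$, your reversal operation (reverse the order, keep the signs) flips $A$ and $C$ but \emph{preserves} $B$, and the bridged sum is additive in all three coordinates; hence every triple you can reach satisfies $b\ge 0$. The sentence ``combining the four generators by repeated bridged sums realizes any $(a,b,c)\in\A$'' conflates generation of $\A$ as an abelian group (integer, possibly negative, coefficients) with what repeated sums of realized blocks give (nonnegative coefficients only): you supply additive inverses in the $A$- and $C$-directions, but the inverses of $(0,1,0)$ and $(\tfrac12,\tfrac12,0)$ are never realized. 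The repair is exactly the device the paper uses: negate $B$ as well, either by flipping all signs (which keeps $A$ and $C$, negates $B$, and preserves $(\star)$ since both clauses of $(\star)$ only compare adjacent signs), or by reversing both the order of the points and the signs, which negates the whole triple; the paper's blocks $\P_1',\P_2',\P_3'$, in particular the one with triple $(-\tfrac12,-\tfrac12,0)$, are obtained this way. With that extra operation your scheme does cover all of $\A$.

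A secondary, fixable imprecision concerns the additivity of $C$ under the bridged sum. A straight bridge has a single common normal, and that one direction must behave correctly at \emph{both} attachment vertices; ``a symmetric argument works at the $\Q$-end'' is not automatic, since the two short arcs you want the normal to lie in may be disjoint. What is actually needed is that the bridge normal avoid the arc antipodal to the broken transition at each end; these two excluded arcs have total angular measure less than $2\pi$, so a common admissible primitive direction exists (alternatively, use a two-segment bridge and choose the two bridge normals independently), but this should be said. Note that the paper avoids the whole issue by arranging all six blocks to pass through the common lattice point $(1,1)$ and joining them there by concatenating the vertex sequences, for which additivity of $A$, $B$ and $C$ is immediate; your bridged sum is more flexible but carries exactly this extra burden of proof.
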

\begin{proof}
It suffices to prove the \lq\lq if" part.  
We pick up $(a,b,c) \in \A$. Then one has an expression   
\begin{equation} \label{eq:abc}
(a,b,c)=a'(1,0,0)+b' \left(\frac{1}{2},\frac{1}{2},0 \right)+c'(0,0,-1) 
\end{equation}
with integers $a',b',c'$ because $a'=a-b$, $b'=2b$ and $c'=-c$. 
One can easily check that $(1,0,0)$, $(\frac{1}{2},\frac{1}{2},0)$ and $(0,0,-1)$ are respectively equal to $(A(\P_j),\frac{1}{2}B(\P_j),C(\P_j))$ of the lattice multi-polygons $\P_j$ ($j=1,2,3$) shown in Figure~\ref{fig:3}, where the sign of $v_iv_{i+1}$ is given by the sign of $\det(v_i,v_{i+1})$ for $\P_j$. 

\begin{figure}[htb!]
\centering
\includegraphics[scale=0.9]{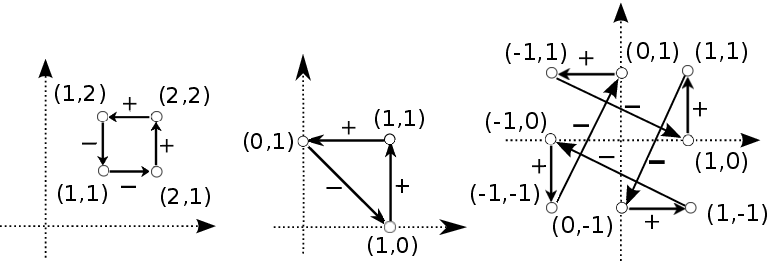}
\caption{lattice multi-polygons $\P_1, \P_2$ and $\P_3$ from the left} \label{fig:3}
\end{figure}

Moreover, reversing both the order of the points and the signs on the sides for $\P_1,\P_2$ and $\P_3$, 
we obtain lattice multi-polygons $\P_1', \P_2'$ and $\P_3'$ whose triples are respectively $(-1,0,0), (-\frac{1}{2}, -\frac{1}{2}, 0)$ and $(0,0,1)$. Since all these six lattice multi-polygons have a common lattice point $(1,1)$, 
one can produce a lattice multi-polygon by joining them as many as we want at the common point and since the triples behave additively with respect to the join operation, this together with \eqref{eq:abc} shows the existence of a lattice multi-polygon with the desired $(a,b,c)$.
\end{proof}

In the rest of the paper, we shall consider several natural subfamilies of lattice multi-polygons and discuss the characterization of their triples.  We note that if $(a,b,c)=(A(\P),\frac{1}{2}B(\P),C(\P))$ for some lattice multi-polygon $\P$, then $(a,b,c)$ must be in the set $\mathcal A$. 

\smallskip

\subsection{Lattice polygons} 
 
One of the most natural subfamilies of lattice multi-polygons would be the family of convex lattice polygons.  Their triples are essentially characterized by P. R. Scott as follows. 

\begin{theo}[\cite{scott76}]\label{scott}
A triple $(a,b,c)\in\mathcal A$ is equal to $(A(P),\frac{1}{2}B(P),C(P))$ of a convex lattice polygon $P$ if and only if $c=1$ and $(a,b)$ satisfies one of the following: \\
\quad {\em (1)} $a+1=b\ge \frac{3}{2}$; \; {\em (2)} $\frac{a}{2}+2\ge b\ge \frac{3}{2}$; \; {\em (3)} $(a,b)=(\frac{9}{2},\frac{9}{2})$. 
\end{theo}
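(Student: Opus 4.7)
The plan is to reduce the classification to Scott's inequality \cite{scott76}. For any convex lattice polygon $P$, the outward normal vectors to the sides make exactly one counterclockwise turn as one traverses the boundary, so $C(P)=1$ is automatic. Pick's formula \eqref{eq:3.1} gives
\[
 a=A(P)=i+\tfrac{1}{2}B(P)-1=i+b-1,
\]
where $i:=\sharp P^\circ\in\Z_{\geq 0}$. In particular $i=a-b+1$ is a non-negative integer (the integrality uses $a+b\in\Z$), and $b\geq 3/2$ since every convex lattice polygon has at least three boundary lattice points. Hence the problem reduces to characterizing the pairs $(i,B)$ with $B=2b$ that arise from convex lattice polygons.

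For the necessity, I would split into cases on $i$. If $i=0$, then $a=b-1$, which is condition (1). If $i\geq 1$, Scott's inequality \cite{scott76} yields $B(P)\leq 2i+7$, with equality if and only if $i=1$ and $B=9$. Substituting $B=2b$ and $i=a-b+1$, the strict form $B\leq 2i+6$ becomes $b\leq a/2+2$, i.e.\ condition (2), while the equality case forces $(a,b)=(9/2,9/2)$, i.e.\ condition (3).

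For the sufficiency I would exhibit a convex lattice polygon realizing each admissible $(a,b)$; since $C(P)=1$ is automatic, this suffices by \eqref{eq1}. For (1) with integer $B\geq 3$, the triangle $(0,0),(1,0),(0,B-2)$ has $B$ boundary and no interior lattice points, so it realizes $(a,b)=(b-1,b)$. For (3), the triangle $(0,0),(3,0),(0,3)$ has $(A,B,i)=(9/2,9,1)$. For (2), I would combine two families: rectangles $[0,p]\times[0,q]$ give even $B=2(p+q)$ with $i=(p-1)(q-1)$, while primitive triangles of the form $(0,0),(\alpha,1),(1,\beta)$ (with appropriate $\alpha,\beta$) provide odd-$B$ examples; subdividing edges by inserting additional lattice points then furnishes any missing $(i,B)$ inside the admissible region $B\leq 2i+6$ with $i\geq 1$.

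The main obstacle is the bookkeeping in (2): one must check that the chosen families of rectangles and primitive triangles, together with edge subdivisions, collectively realize every admissible pair $(i,B)$ under both parities of $B$. Modulo this systematic construction, the argument is a direct translation of Scott's inequality through Pick's formula, together with the observation that $C(P)=1$ for every convex lattice polygon.
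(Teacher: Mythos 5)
The paper gives no proof of this statement: it is simply quoted from \cite{scott76}, so there is no internal argument to compare yours with, and the only question is whether your derivation is sound. Your necessity direction is correct and is the natural translation: $C(P)=1$ for any convex lattice polygon, Pick's formula identifies $i=\sharp P^\circ=a-b+1\in\Z_{\ge 0}$, $b\ge\frac{3}{2}$ follows from $B(P)\ge 3$, and Scott's inequality $B\le 2i+7$ for $i\ge 1$, with equality only for the triangle $(0,0),(3,0),(0,3)$, translates exactly into conditions (1)--(3). Since that inequality and its equality case are the substance of \cite{scott76}, this part is a translation rather than an independent proof, which is acceptable for a theorem the paper itself only cites.

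The genuine gap is the sufficiency in case (2): one must exhibit, for every pair of integers $i\ge 1$ and $3\le B\le 2i+6$, a convex lattice polygon with exactly $i$ interior and $B$ boundary lattice points, and you explicitly leave this ``bookkeeping'' undone. Moreover, the device you name for completing it, ``subdividing edges by inserting additional lattice points'', is vacuous: $B(P)$ counts all lattice points on $\partial P$, so declaring extra boundary points to be vertices changes none of $A(P)$, $B(P)$, $\sharp P^\circ$; to change $B$ you must change the polygon, and then $(i,B)$ must be recomputed. A workable completion is an explicit family covering the whole range, e.g. the triangles $(0,0),(2,0),(0,2k)$ realize the extreme pairs $(i,B)=(k-1,2k+4)$, i.e. $B=2i+6$, after which one needs further polygons (or edge truncations, with the counts redone) realizing every smaller $B\ge 3$ for fixed $i$ --- or one simply cites \cite{scott76} for realizability as well. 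One further caveat: read literally, condition (2) also contains the triples $(0,2,1)$ and $(-\frac{1}{2},\frac{3}{2},1)$ of $\A$, which no convex lattice polygon realizes (one needs $A(P)>0$ and $\sharp P^\circ\ge 0$); the intended statement, and the one your constructions could actually prove, carries the additional constraint $a+1\ge b$ as in Proposition \ref{prop:notconv}, and your sufficiency argument should state this restriction rather than claim all of region (2).
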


If we do not require the convexity, then the characterization becomes simpler than Theorem~\ref{scott}. 

\begin{prop}\label{prop:notconv}
A triple $(a,b,c)\in\mathcal A$ is equal to $(A(P),\frac{1}{2}B(P),C(P))$ of a (not necessarily convex) lattice polygon $P$ if and only if  $c=1$ and $a+1 \geq b \geq \frac{3}{2}$. 
\end{prop}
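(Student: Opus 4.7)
The plan is to imitate the strategy used for Scott's theorem (Theorem~\ref{scott}): first establish that the three conditions are necessary, then construct a lattice polygon realizing each admissible triple.

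For necessity, view any lattice polygon $P$ as a lattice multi-polygon with sign $\ep\equiv+$ and the counterclockwise orientation. Then $C(P)=1$ because the outward normals of a simple closed counterclockwise curve rotate through exactly one full turn; $B(P)\ge 3$ because every polygon has at least three vertices, giving $b\ge 3/2$; and the classical Pick formula yields $\sharp P^{\circ}=A(P)-\tfrac12 B(P)+1=a+1-b\ge 0$.

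For sufficiency, fix $(a,b,1)\in\A$ with $a+1\ge b\ge 3/2$ and set $B:=2b\in\Z_{\ge 3}$ and $I:=a+1-b\in\Z_{\ge 0}$. It suffices to exhibit a lattice polygon with exactly $B$ boundary and $I$ interior lattice points, because then $A(P)=a$ follows from Pick and $C(P)=1$ is automatic. I would split into three cases. If $I=0$, the triangle with vertices $(0,0),(B-2,0),(0,1)$ has $B$ boundary lattice points and no interior ones. If $I\ge 1$ and $B\le 2I+6$, the condition $B\le 2I+6$ is equivalent to $b\le a/2+2$, so Theorem~\ref{scott} (case (2)) supplies a convex polygon. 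If $I\ge 1$ and $B\ge 2I+7$, I would take the non-convex pentagon with vertices $(-n,0),(I+1,0),(I+1,2),(0,2),(0,1)$, where $n:=B-2I-6\ge 1$; geometrically this is the rectangle $[0,I+1]\times[0,2]$ with a triangular spike of base $n$ attached to the lower half of its left edge, between $(0,0)$ and $(0,1)$.

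The main obstacle is verifying the pentagon in the third case. Simplicity holds because the slanted edge from $(0,1)$ to $(-n,0)$ lies in the half-plane $x\le 0$ and so can meet the remaining edges only at its endpoints. Summing the five edge-gcds gives $(I+1+n)+2+(I+1)+1+1=B$ boundary lattice points, and the area decomposes as $2(I+1)+n/2=I+B/2-1$ (rectangle plus triangular spike). Pick's formula then delivers exactly $I$ interior lattice points, so $(A(P),\tfrac12 B(P),C(P))=(a,b,1)$, as required. The vertex $(0,1)$ is a reflex vertex, confirming that the pentagon is genuinely non-convex, which is necessary precisely when $B>2I+6$ by Scott's theorem.
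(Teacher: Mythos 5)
Your proposal is correct and follows essentially the same route as the paper: necessity from $C(P)=1$, $B(P)\ge 3$ and Pick's formula ($\sharp P^\circ\ge 0$), and sufficiency by invoking Theorem~\ref{scott} when $b\le \frac{a}{2}+2$ and otherwise building a non-convex polygon consisting of a $2\times(a-b+2)$ rectangle with a thin height-one triangular spike, which is exactly the shape of the polygon in Figure~\ref{fig:4} (your spike placement and the separate treatment of the case $a+1=b$ are only cosmetic differences). The verification of the boundary count, area, and simplicity in your pentagon is accurate, so no gap remains.
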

\begin{proof}
If $P$ is a lattice polygon, then we have 
\[
C(P)=1,\quad B(P)\ge 3, \quad A(P)-\frac{1}{2}B(P)+1=\sharp P^\circ\ge 0
\]
and this implies the \lq\lq only if" part.  

On the other hand, let $(a,b,1) \in \A$ with $a+1 \geq b \geq \frac{3}{2}$. 
Thanks to Theorem~\ref{scott}, we may assume that $b > \frac{a}{2}+2$, that is, $4b-2a-6 >2$. 
Let $P$ be the lattice polygon shown in Figure~\ref{fig:4}. 
\begin{figure}[htb!] 
\centering
\includegraphics[scale=0.6]{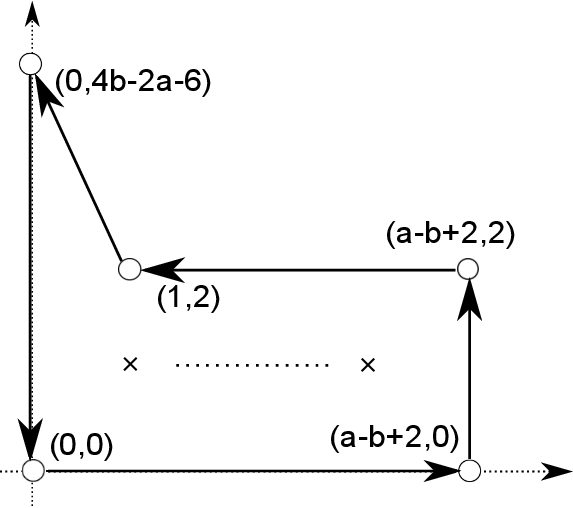}
\caption{a lattice polygon $P$ with $(A(P),\frac{1}{2}B(P),C(P))=(a,b,c)$} \label{fig:4}
\end{figure}
Then, one has 
\[A(P)=2(a-b+2)+\frac{1}{2}(4b-2a-8)=a\] and \[B(P)=(a-b+2)+2+(a-b+1)+1+4b-2a-6=2b.\]
This shows that $(A(P),\frac{1}{2}B(P),C(P))=(a,b,c)$, as desired. 
\end{proof}

\smallskip

\subsection{Unimodular lattice multi-polygons} 

We say that a lattice multi-polygon $\P=(v_1,\ldots,v_d)$ is \emph{unimodular} if the sequence $(v_1,\ldots,v_d)$ is unimodular and the sign assignment $\ep$ is defined by $\ep(v_iv_{i+1})=\det(v_i,v_{i+1})$ for $i=1,\ldots,d$, where $v_{d+1}=v_1$. 
When a unimodular lattice multi-polygon $\P$ arises from a convex lattice polygon, 
$\P$ is essentially the same as so-called a \emph{reflexive polytope} of dimension 2, 
which is completely classified (16 polygons up to equivalence, see, e.g. \cite[Figure 2]{po-rv00}) and the triples $(A(P),\frac{1}{2}B(P),C(P))$ of reflexive polytopes $P$ are characterized by the condition that $c=1$ and $a=b \in \left\{\frac{3}{2},2,\frac{5}{2},3,\frac{7}{2},4,\frac{9}{2} \right\}$. 

We can characterize $(A(P),\frac{1}{2}B(P),C(P))$ of unimodular lattice multi-polygons $P$ as follows. 

\begin{theo}\label{umpolygon}
A triple $(a,b,c)\in\mathcal A$ is equal to $(A(\P),\frac{1}{2}B(\P),C(\P))$ of a unimodular lattice multi-polygon $\P$ if and only if $a=b$.  
\end{theo}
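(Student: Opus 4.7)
The plan is to prove both directions directly, extending the join-at-a-common-point construction from the proof of Theorem~\ref{mpolygon}.

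\textbf{Only if.} Suppose $\P=(v_1,\ldots,v_d)$ is unimodular, so $\ep(v_iv_{i+1})=\det(v_i,v_{i+1})=:\e_i\in\{\pm 1\}$. Since $(v_i,v_{i+1})$ is a $\Z$-basis of $\Z^2$, the vector $v_{i+1}-v_i$ is primitive, whence $|v_iv_{i+1}|=1$ and $B(\P)=\sum_{i=1}^d\e_i$. The shoelace formula for the oriented loop $\P$ then gives
\[
A(\P)=\frac{1}{2}\sum_{i=1}^d\det(v_i,v_{i+1})=\frac{1}{2}\sum_{i=1}^d\e_i=\frac{1}{2}B(\P),
\]
so $a=b$.

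\textbf{If.} Given $(a,a,c)\in\A$ with $a\in\frac{1}{2}\Z$ and $c\in\Z$, I will realize this triple by joining copies of two elementary unimodular multi-polygons that both pass through $(1,0)$:
\[
\mathcal R_1=\bigl((1,0),(0,1),(-1,-1)\bigr),\qquad \mathcal R_2=\bigl((1,0),(0,1),(-1,1)\bigr).
\]
Their triples are $(\tfrac{3}{2},\tfrac{3}{2},1)$ and $(\tfrac{1}{2},\tfrac{1}{2},0)$: the first two entries follow from the only-if computation, while $C$ is computed either directly from the normals ($\mathcal R_1$ is a counterclockwise triangle with the origin in its interior, so its normals wind once; $\mathcal R_2$ does not enclose the origin, so its normals do not wind at all) or by invoking Theorem~\ref{theo:3.1}. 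Reversing each block flips all three invariants and produces unimodular multi-polygons with triples $(-\tfrac{3}{2},-\tfrac{3}{2},-1)$ and $(-\tfrac{1}{2},-\tfrac{1}{2},0)$, still passing through $(1,0)$.

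Joining unimodular multi-polygons at a common lattice point preserves unimodularity (every new consecutive pair is already a $\Z$-basis inside one of the summands) and is additive on $A$, $\frac{1}{2}B$, and $\sharp\P$; hence, by Theorem~\ref{theo:3.1}, it is additive on $C$ as well. Since
\[
(a,a,c)=c\cdot\bigl(\tfrac{3}{2},\tfrac{3}{2},1\bigr)+(2a-3c)\cdot\bigl(\tfrac{1}{2},\tfrac{1}{2},0\bigr)
\]
with $2a-3c\in\Z$, joining $|c|$ copies of $\mathcal R_1$ and $|2a-3c|$ copies of $\mathcal R_2$ (taking the reversed block whenever the coefficient is negative) at $(1,0)$ realizes the desired triple; the trivial case $(0,0,0)$ is handled by joining $\mathcal R_1$ with its reversal.

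The main obstacle I anticipate is verifying that the concatenation is a genuine lattice multi-polygon, i.e.\ that condition $(\star)$ holds at each junction. Within each building block $(\star)$ is automatic from the $\det$-sign assignment. At a junction the three consecutive points are $u_p,(1,0),v_2$, and a short case analysis shows that they are collinear only when $u_p=v_2$; in that case $\det((1,0),v_2)=-\det(u_p,(1,0))$ by antisymmetry, so the two adjacent sides carry opposite signs and case~(2) of $(\star)$ is satisfied.
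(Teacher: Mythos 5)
Your proof is correct and follows essentially the same strategy as the paper: the \lq\lq only if'' direction via $A(\P)=\frac{1}{2}\sum\det(v_i,v_{i+1})=\frac{1}{2}B(\P)$ for unimodular sequences, and the \lq\lq if'' direction by writing $(a,a,c)$ as an integer combination of triples of elementary unimodular multi-polygons and joining copies (and reversals) at a common lattice point, using additivity of the invariants. The only difference is cosmetic --- you use generators with triples $(\frac{3}{2},\frac{3}{2},1)$ and $(\frac{1}{2},\frac{1}{2},0)$ instead of the paper's $\P_2,\P_3$ with triples $(\frac{1}{2},\frac{1}{2},0)$ and $(0,0,-1)$ --- and you spell out the verification of condition $(\star)$ at the junctions, which the paper leaves implicit.
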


\begin{proof}
If $\P$ is a unimodular lattice multi-polygon arising from a unimodular sequence $v_1,\dots,v_d$, then one sees that 
\[
\begin{split}
A(\P)&=\frac{1}{2}\sum_{i=1}^d\det(v_i,v_{i+1})\\
B(\P)&=\sum_{i=1}^d\det(v_i,v_{i+1})|v_iv_{i+1}|=\sum_{i=1}^d\det(v_i,v_{i+1})
\end{split}
\]
and this implies the \lq\lq only if" part.

Conversely, if $(a,b,c) \in \A$ satisfies $a=b$, then one has an expression  
\[
(a,b,c)=a' \left(\frac{1}{2},\frac{1}{2},0 \right)+c'(0,0,-1)
\]
with integers $a',c'$ because $a'=2a$ and $c'=-c$.  We note that the lattice multi-polygons $\P_2, \P_3, \P_2'$ and $\P_3'$ in the proof of Theorem~\ref{mpolygon} are unimodular lattice multi-polygons.  Therefore, joining them as many as we want at the common point $(1,1)$, we can find a unimodular lattice multi-polygon $(A(P),\frac{1}{2}B(P),C(P))=(a,b,c)$, as required. 
\end{proof}

\begin{exam}
The $\P$ and $\Q$ in Example~\ref{ex1} are unimodular lattice multi-polygons and we have 
$$\left(A(\P),\frac{1}{2}B(\P),C(\P)\right)=\left(\frac{3}{2},\frac{3}{2},1\right) \text{ and } \left(A(\Q),\frac{1}{2}B(\Q),C(\Q)\right)=(3,3,2).$$ 
\end{exam}

\smallskip

\subsection{Some other subfamilies of lattice multi-polygons}

\begin{exam}[Left-turning (right-turning) lattice multi-polygons]\label{lrpolygon}
We say that a lattice multi-polygon $\P$ is \emph{left-turning} (resp. \emph{right-turning}) 
if $\det(v-u,w-u)$ is always positive (resp. negative) for consecutive three points $u,v,w$ in $\P$ arranged in this order 
 not lying on a same line. In other words, $w$ lies in the left-hand side (resp. right-hand side) with respect to the direction from $u$ to $v$. For example, $\P_1$, $\P_2$ and $\P_3$ in Figure~\ref{fig:3} and $\Q$ in Example~\ref{ex1} (b) are all left-turning. 

Somewhat suprisingly, the left-turning (or right-turning) condition does not give any restriction on the triple $(A(\P),\frac{1}{2}B(\P),C(\P))$, 
that is, every $(a,b,c)\in\A$ can be equal to $(A(\P),\frac{1}{2}B(\P),C(\P))$ of a left-turning (or right-turning) lattice multi-polygon $\P$.  
A proof is given by using the lattice multi-polygons $\P_1,\P_2, \P_3$ shown in Figure \ref{fig:3} together with $\P_4,\P_5, \P_6$ shown in Figure \ref{fig:5}. 
Remark that the signs of $\P_4,\P_5$ and $\P_6$ do not always coincide with the sign of $\det(v_i,v_{i+1})$.

\begin{figure}[htb!]
\centering
\includegraphics[scale=0.63]{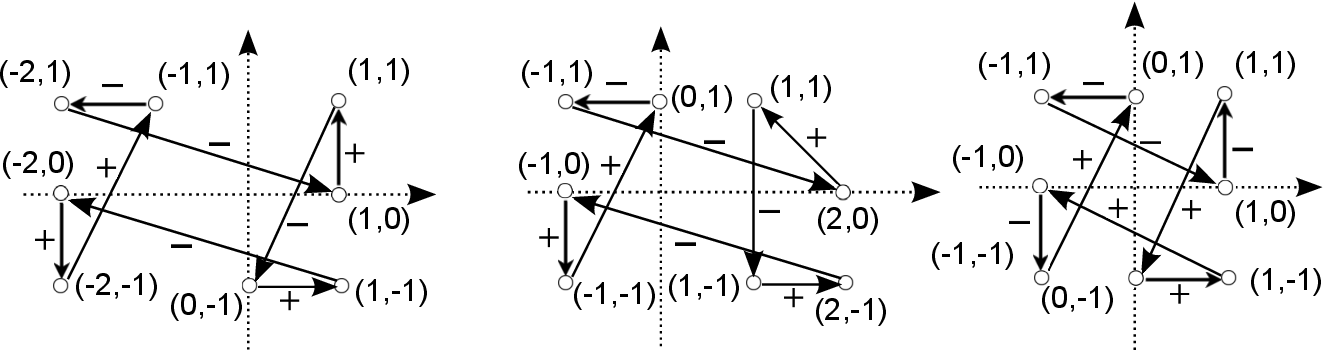}
\caption{lattice multi-polygons $\P_4, \P_5$ and $\P_6$ from the left} \label{fig:5}
\end{figure}
\end{exam}

\smallskip

\begin{exam}[Left-turning lattice multi-polygons with all $+$ signs]
We consider left-turning lattice multi-polygons $\P$ and impose one more restriction that the signs on the sides of $\P$ are all $+$. In this case, some interesting phenomena happen. For example, a simple observation shows that 
\begin{equation}\label{leftcoil}
B(\P) \geq 2 C(\P)+1 \;\;\text{and}\;\; C(\P) \geq 1. 
\end{equation}
We note that $C(\P)=1$ if and only if $\P$ arises from a convex lattice polygon, and those $(A(\P),\frac{1}{2}B(\P),C(\P))$ are characterized by Theorem~\ref{scott}.  Therefore, it suffices to treat the case where $C(\P)\ge 2$ 
and we can see that 
a triple $(a,b,c)\in\A$ is equal to $(A(\P),\frac{1}{2}B(\P),C(\P))$ of a left-turning lattice multi-polygon $\P$ with all $+$ signs if 
\[b \geq c + 1 \;\text{ and }\; c \geq 2.\] 
This condition is equivalent to $B(\P)\ge 2C(\P)+2$ for a lattice multi-polygon. On the other hand, we have $B(\P)\ge 2C(\P)+1$ for a left-turning lattice multi-polygon $\P$ with all $+$ signs by \eqref{leftcoil}.  Therefore, the case where $B(\P)=2C(\P)+1$ is not covered above and this extreme case is exceptional. 
In fact, one can observe that if $\P$ is a left-turning multi-polygon with all $+$ signs and $B(\P)=2C(\P)+1$, then $\sharp \P^\circ \geq 0$, that is, $A(\P) \geq \frac{1}{2}$. 
\end{exam}

\smallskip

\begin{exam}[Lattice multi-polygons with all $+$ signs]
Finally, we consider lattice multi-polygons $\P$ with all $+$ signs, namely, we do not assume that 
$\P$ is either left-turning or right-turning. 
However, this case is similar to the previous one (left-turning lattice multi-polygons with all $+$ signs). 
For example, when $C(\P) \not= 0$, we still have $B(\P) \geq 2 |C(\P)|+1$. Thus, we also have that 
a triple $(a,b,c) \in \mathcal A$ is equal to $(A(\P),\frac{1}{2}B(\P),C(\P))$ of a lattice multi-polygon $\P$ with all $+$ signs 
if \[b \geq |c|+1 \;\text{ and } |c| \geq 2.\] 
Moreover, when $B(\P)=2|C(\P)|+1$, $\P$ must be left-turning or right-turning according as $C(\P)>0$ or $C(\P)<0$. Hence, we can say that 
when we discuss $(A(\P),\frac{1}{2}B(\P),C(\P))$ of lattice multi-polygons $\P$ with all $+$ signs, 
it suffices to consider those of left-turning or right-turning ones when $C(\P) \not\in \{-1,0,1\}$. 

On the other hand, on the remaining exceptional cases where $C(\P)=0$ or $C(\P)=\pm 1$, 
we can characterize the triples completely as follows. Let $(a,b,c) \in \mathcal A$. 
\begin{itemize}
\item[(a)] When $c=0$, 
$(a,b,c)$ is equal to $(A(\P),\frac{1}{2}B(\P),C(\P))$ of a lattice multi-polygon $\P$ with all $+$ signs 
if and only if $b \geq 2$. See Figure \ref{fig:c=0}. 
\item[(b)] When $c=1$, 
$(a,b,c)$ is equal to $(A(\P),\frac{1}{2}B(\P),C(\P))$ of a lattice multi-polygon $\P$ with all $+$ signs 
if and only if either $b \geq \frac{5}{2}$ or $\frac{3}{2} \leq b \leq 2$ and $a-b+1 \geq 0$. See Figure \ref{fig:c=1} and Proposition \ref{prop:notconv}. 
\item[(c)] When $c=-1$, 
$(a,b,c)$ is equal to $(A(\P),\frac{1}{2}B(\P),C(\P))$ of a lattice multi-polygon $\P$ with all $+$ signs 
if and only if either $b \geq \frac{5}{2}$ or $\frac{3}{2} \leq b \leq 2$ and $a+b-1 \leq 0$. 
One can simply reverse the order of the vertices and flip the sign of $a$ of the example in Figure \ref{fig:c=1}.
\end{itemize}

\begin{figure}[htb!]
\centering
\includegraphics[scale=0.6]{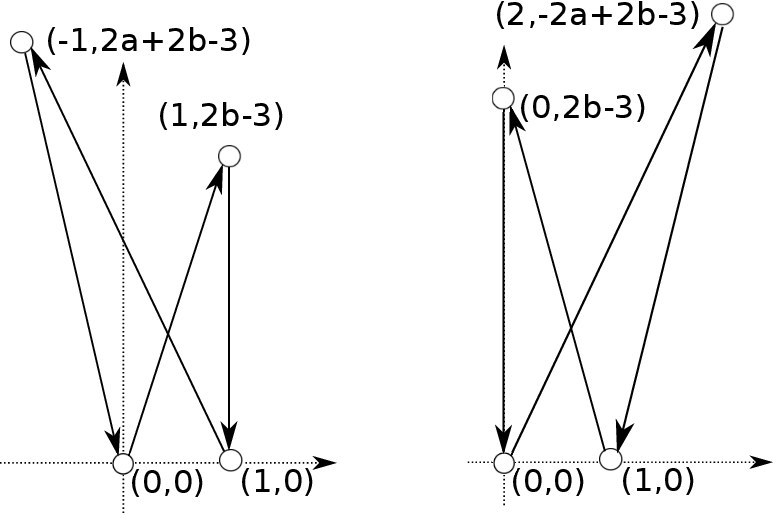}
\caption{lattice multi-polygons with all $+$ signs whose triples equal $(a,b,0)$ when $a+b \geq 2$ and $a+b \leq 2$, respectively}\label{fig:c=0}
\end{figure}

\begin{figure}[htb!]
\centering
\includegraphics[scale=0.6]{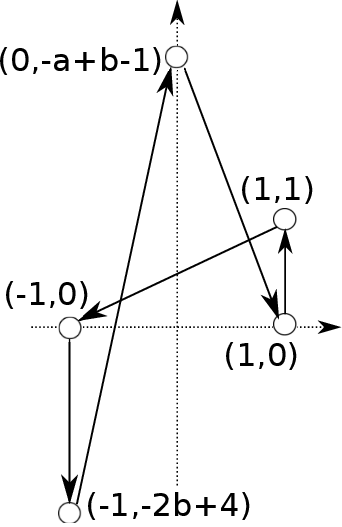}
\caption{a lattice multi-polygon with all $+$ signs whose triple equals $(a,b,1)$ when $b \geq \frac{5}{2}$} \label{fig:c=1}
\end{figure}
\end{exam}

\newpage

\appendix
\section{Proof of Theorem \ref{rotation} using toric topology}

{Theorem~\ref{rotation} was originally proved using toric topology.  In fact, it is} proved in \cite[Section 5]{masu99} when $\e_i=1$ for every $i$ and the argument there works in our general setting with a little modification, which we shall explain.  

We identify $\Z^2$ with $H_2(BT)$ where $T=(S^1)^2$ and $BT$ is the classifying space of $T$. We may think of $BT$ as $(\C P^\infty)^2$.  For each $i$ $(i=1,\dots, d)$, we form a cone $\angle v_iv_{i+1}$ in $\R^2$ spanned by $v_i$ and $v_{i+1}$ and attach the sign $\e_i$ to the cone.  The collection of the cones $\angle v_iv_{i+1}$ with the signs $\e_i$ attached form a multi-fan $a_j$ and the same construction as in \cite[Section 5]{masu99} produces a real $4$-dimensional closed connected smooth manifold $M$ with an action of $T$ satisfying the following conditions:
\begin{enumerate}
\item $H^{odd}(M)=0$.
\item $M$ admits a unitary (or weakly complex) structure preserved under the $T$-action and the multi-fan associated to $M$ with this unitary structure is the given $a_j$.
\item Let $M_i$ $(i=1,\dots,d)$ be the characteristic submanifold of $M$ corresponding to the edge vector $v_i$, that is, $M_i$ is a real codimension two submanifold of $M$ fixed pointwise under the circle subgroup determined by the $v_i$.   Then $M_i$ does not intersect with $M_j$ unless $j=i-1,i,i+1$ and the intersection numbers of $M_i$ with $M_{i-1}$ and $M_{i+1}$ are $\e_{i-1}$ and $\e_i$ respectively.
\end{enumerate}

Choose an arbitrary element $v\in \R^2$ not contained in any one-dimensional cone in the multi-fan $a_j$.  Then Theorem 4.2 in \cite{masu99} says that the Todd genus $T[M]$ of $M$ is given by 
\begin{equation} \label{todd}
T[M]=\sum_i \e_i, 
\end{equation}
where the sum above runs over all $i$'s such that the cone $\angle v_iv_{i+1}$ contains the vector $v$.  Clearly the right hand side in \eqref{todd} agrees with the rotation number of the sequence $v_1,\dots,v_d$ around the origin.  In the sequel, we compute the Todd genus $T[M]$.  

Let $ET\to BT$ be the universal principal $T$-bundle and $M_T$ the quotient of $ET\times M$ by the diagonal $T$-action.  The space $M_T$ is called the Borel construction of $M$ and the equivariant cohomology $H^q_T(M)$ of the $T$-space $M$ is defined to be $H^q(M_T)$.  The first projection from $ET\times M$ onto $ET$ induces a fibration 
\[
\pi\colon M_T\to ET/T=BT
\] 
with fiber $M$.  The inclusion map $\iota$ of the fiber $M$ to $M_T$ induces a surjective homomorphism $\iota^*\colon H_T^q(M)\to H^q(M)$.  

Let $\xi_i\in H^2_T(M)$ be the Poincar\'e dual to the cycle $M_i$ in the equivariant cohomology.  The $\xi_i$ restricts to the ordinary Poincar\'e dual $x_i\in H^2(M)$ to the cycle $M_i$ through the $\iota^*$.  By Lemma 1.5 in \cite{masu99}, we have 
\begin{equation} \label{1.5}
\pi^*(u)=\sum_{j=1}^d\langle u,v_j\rangle \xi_j \quad \text{for any $u\in H^2(BT)$}, 
\end{equation}
where $\langle\ ,\ \rangle$ denotes the natural pairing between cohomology and homology.  Multiplying the both sides of \eqref{1.5} by $\xi_i$ and restricting the resulting identity to the ordinary cohomology by $\iota^*$, we obtain  
\begin{equation} \label{uvx}
0=\langle u, v_{i-1}\rangle x_{i-1}x_i+\langle u,v_i\rangle x_i^2 +\langle u,v_{i+1}\rangle x_{i+1}x_i \quad \text{for all $u\in H^2(BT)$}
\end{equation}
because $M_i$ does not intersect with $M_j$ unless $j=i-1,i,i+1$, where $x_{d+1}=x_1$.  We evaluate the both sides of \eqref{uvx} on the fundamental class $[M]$ of $M$.  Since the intersection numbers of $M_i$ with $M_{i-1}$ and $M_{i+1}$ are respectively $\e_{i-1}$ and $\e_i$ as mentioned above, the identity \eqref{uvx} reduces to   
\begin{equation} \label{u}
0=\langle u, v_{i-1}\rangle \e_{i-1}+ \langle u,v_i\rangle \langle x_i^2,[M]\rangle+\langle u,v_{i+1}\rangle \e_i \quad \text{for all $u\in H^2(BT)$}
\end{equation}
and further reduces to 
\begin{equation} \label{xi2}
 0=\e_{i-1}v_{i-1}+\langle x_i^2,[M]\rangle v_i +\e_i v_{i+1}
\end{equation}
because \eqref{u} holds for any $u\in H^2(BT)$.  
Comparing \eqref{xi2} with \eqref{ai}, we conclude that $\langle x_i^2,[M]\rangle=a_i$.  Summing up the above argument, we have 
\begin{equation} \label{sumup}
\langle x_ix_j,[M]\rangle =\begin{cases} \e_{i-1} \quad&\text{if $j=i-1$},\\
a_i \quad&\text{if $j=i$},\\
\e_i \quad&\text{if $j=i+1$},\\
0 \quad&\text{otherwise}.
\end{cases}
\end{equation}

By Theorem 3.1 in \cite{masu99} the total Chern class $c(M)$ of $M$ with the unitary structure is given by $\prod_{i=1}^d(1+x_i)$.  Therefore 
\[
c_1(M)=\sum_{i=1}^dx_i,\quad\quad c_2(M)=\sum_{i<j}x_ix_j
\]
and hence 
\[
\begin{split}
T[M]&=\frac{1}{12}\langle c_1(M)^2+c_2(M),[M]\rangle \\
&=\frac{1}{12}\langle (\sum_{i=1}^dx_i)^2+\sum_{i<j}x_ix_j,[M]\rangle\\
&=\frac{1}{12}(\sum_{i=1}^d a_i+3\sum_{i=1}^d\e_i), 
\end{split}
\]
where the first identity is known as Noether's formula when $M$ is an algebraic surface and known to hold even for unitary manifolds, and we used \eqref{sumup} at the last identity.  This proves the theorem because $T[M]$ agrees with the desired rotation number as remarked at \eqref{todd}.

\end{document}